  \newcommand{\Z}{\mathds{Z}}
  \newcommand{\C}{\mathds{C}}
  \newcommand{\Q}{\mathds{Q}}
  \newcommand{\R}{\mathds{R}}
  \newcommand{\F}[1]{\mathds{F}_{#1}}
  \newcommand{\Fq}{{\F{q}}}
  \newcommand{\GL}{\mathop{\mathrm{GL}}}
  \newcommand{\Aut}[2]{\mathrm{Aut}_{#1}(#2)}
  \newcommand{\rk}{{\mathop{\mathrm{rk}}}}
  \newcommand{\Spec}{\mathop{\mathrm{Spec}}}
  \newcommand{\etale}{{\'{e}tale}}
  \newcommand{\Ker}{{\mathop{\mathrm{ker}}}}
  \newcommand{\QEnd}[1]{{\mathrm{QEnd}(#1)}}
  \newcommand{\End}[2]{{\EndCat{#1}{#2}}}
  \newcommand{\EndCat}[2]{{\mathrm{End}_{#1}(#2)}}
  \newcommand{\Hom}[3]{\mathrm{Hom}_{#3}(#1,#2)}
  \newcommand{\QHom}[3]{\mathrm{QHom}_{#3}(#1,#2)}
  \newcommand{\Am}{$ A $-motive}
  \newcommand{\M}{{\underline{\hat{M}}}} 
  \newcommand{\N}{\underline{\hat{N}}} 
  \newcommand{\Carlitz}{{\underline{\hat{C}}}}
  \newcommand{\MC}{{\mathcal{C}}}
  \newcommand{\MO}[1]{\mathcal{O}_{#1}}
  \newcommand{\MOC}{\MO{\MC}}
  \newcommand{\Laurrentreihe}[1]{\dbl #1\dbr}
  \newcommand{\DoppelRund}[1]{{(\!(#1)\!)}}
  \newcommand{\DoppelSpitz}[1]{{\langle\!\langle#1\rangle\!\rangle}}
  \newcommand{\Heins}[2]{\text{H}^1_{#1}(#2)}
  \newcommand{\GM}[1]{\mathbb{G}_{m #1}}
  \newcommand{\MG}[1]{\Gamma_{#1}} 
  \newcommand{\iMG}[1]{\Gamma_{#1}^i}
  \newcommand{\Gal}{\mathrm{Gal}}
  \newcommand{\GR}[1]{\rho_{#1}}
  \newcommand{\AbsGalois}[1]{\mathcal{G}_{#1}}
  \newcommand{\KatLshtuka}[1]{\texttt{Sht}_{#1}}
  \newcommand{\KatRep}[3]{\mbox{\texttt{Rep}}_{#1}^{\text{\tiny#3}}(#2)}
  \newcommand{\Erzeugt}[1]{\langle#1\rangle}
  \newcommand{\TensorAut}[2]{\mathrm{Aut}^{\otimes}(#1|#2)}
  \newcommand{\KatVec}[1]{\text{\texttt{Vec}}_{#1}}
  \newcommand{\KatMod}[1]{\text{\texttt{Mod}}_{#1}}
  \newcommand{\dsim}{\rotatebox[origin=c]{-90}{$\sim$}}
  \newcommand{\dbl}{{\mathchoice{\mbox{\rm [\hspace{-0.15em}[}}
  		{\mbox{\rm [\hspace{-0.15em}[}}
  		{\mbox{\scriptsize\rm [\hspace{-0.15em}[}}
  		{\mbox{\tiny\rm [\hspace{-0.15em}[}}}}
  \newcommand{\dbr}{{\mathchoice{\mbox{\rm ]\hspace{-0.15em}]}}
  		{\mbox{\rm ]\hspace{-0.15em}]}}
  		{\mbox{\scriptsize\rm ]\hspace{-0.15em}]}}
  		{\mbox{\tiny\rm ]\hspace{-0.15em}]}}}}
  \newcommand{\TateVFunc}[1]{ \check{V}_v{#1}}
  \newcommand{\TateTFunc}[1]{ \check{T}_v{#1}}
  \newcommand{\HalgGp}[1]{ \text{H}_{#1}}
  \newcommand{\Fqt}{\F{q}[t]}
  \newcommand{\Fqrt}{\F{q}(t)}
  \newcommand{\Fqz}{\F{q}\Laurrentreihe{z}}
  \newcommand{\Fqrz}{\F{q}\DoppelRund{z}}
  \newcommand{\Fqrzeta}{\F{q}\DoppelRund{\zeta}}
  \newcommand{\Rz}{R\Laurrentreihe{z}}
  \newcommand{\DrinfeldStrata}[1][]{%
      \ifthenelse{ \equal{#1}{} }
        {\ensuremath{M_{\mathcal{n},\F{\nu}}^{r}}}
        {\ensuremath{M_{\mathcal{n},\F{\nu}}^{r,{#1}}}}
      }
  \newcommand\restrict[1]{\raisebox{-.5ex}{$|$}_{#1}}
  \definecolor{db}{RGB}{23,20,119}
  \newtheoremstyle{Bunt}{}{}{\color{db}}{}{\color{db}\bfseries}{}{ }{}
  \newtheoremstyle{test}
  {10 pt}
  {10 pt}
  {}
  {0pt}
  {\bfseries}
  {. }
  { }
  {\thmname{#1}\thmnumber{ #2}\thmnote{ (#3)}}
  \newtheoremstyle{heroem}
  {\parindent}
  {\parindent}
  {\itshape}
  {0pt}
  {\bfseries}
  {}
  {1pt plus 1pt minus 1pt }
  {}
  \theoremstyle{definition}
  \newtheorem{defi}{Definition}[section]
  \theoremstyle{plain}
  \newtheorem{prop}[defi]{Proposition}
  \newtheorem{Theorem}[defi]{Theorem}
  \theoremstyle{heroem}
  \newtheorem*{heorem}{Theorem}
  \newtheorem*{rop}{Proposition}
  \newtheorem*{orollary}{Corollary}
  \theoremstyle{plain}
  \newtheorem{Corollary}[defi]{Corollary}
  \newtheorem{Lemma}[defi]{Lemma}
  \theoremstyle{remark}
  \newtheorem{rem}[defi]{Remark}
  \theoremstyle{test}
  \newtheorem{ex}[defi]{Example}
  \title{Non-openness of v-adic Galois representation for A-motives}
  \author{Maike Ella Elisabeth Frantzen}
\begin{document}
	\setlist[enumerate]{itemsep=0pt}
  \begin{abstract}
  	Drinfeld-modules and \Am s are the function field analogous of elliptic curves and abelian varieties. For the latter one  can construct the $l$-adic Galois representation and can ask if  its image is open. For Drinfeld-modules this question was answered by Pink and his coauthors. We clarify the rank one case and show that the image of Galois is open if and only if the virtual dimension is prime to the characteristic of the ground field.
  \end{abstract}
	\maketitle
	\setcounter{tocdepth}{1}
  \setlength{\parindent}{0em}
	\tableofcontents
  \section{Introduction}
  Let $A$ be an abelian variety of dimension $d $ over a number field $K$, where $K$ is fixed inside $\C$. For  any rational prime $l$, the  $l$-adic Tate-module of $A$ is given by $T_lA= \varprojlim A[l^{n+1}](K^{sep})$, for $K^{sep}$ being a separable closure of $K$. It is a free $\Z_l$-module of rank $2d$ and carries a natural $\AbsGalois{K}:=\mathrm{Gal}(K^{sep}/K)$-action. The rational $l$-adic Tate-module is the $2d$-dimensional vector space over $\Q_l$ given by $V_lA:= T_lA \otimes_{\Z_l} \Q_l$.
  This defines a natural representation
  \begin{align*}
  \GR{l}:\AbsGalois{K} \to \mathrm{Aut}(T_lA)
  \end{align*}
  contained in $\mathrm{Aut}(V_lA)\cong\mathrm{GL}_{2d}(\Q_l)$. This is called the $l$-adic Galois representation attached to an abelian variety $A$.
  Another algebraic group associated to $A$ is the \emph{Mumford-Tate} group $MT(A)$. It is  a $\Q$-subgroup of $\mathrm{GL}(V)$, where $ V := H_1(A(\C),\Q)$ is the first rational homology group of $A$.
  The \emph{Mumford-Tate conjecture} asks if the Zariski closure of $\GR{l}(\AbsGalois{K})$ is equal to the Mumford-Tate group $ MT(A) $ over $\Q_l$. One can furthermore ask,  if an open subgroup of  $\GR{l}(\AbsGalois{K})$  is also an open subgroup of the $\Q_l$-valued Mumford Tate group.\\
  The same question can be asked in the theory function field arithmetic. Here elliptic curves and abelian varieties are replaced by Drinfeld modules and $A$-motives. In the Drinfeld case, the above question was answered by Richard Pink in~\cite{Pink.1997}. Here he proves the Mumford-Tate conjecture for Drinfeld modules by showing the openness for the Galois representation.  As higher generalization of Drinfeld modules, Greg Anderson defined in~\cite{Anderson.1986} abelian $t$-modules and their dual notion of $t$-motives. For these objects Andreas Maurischat   gives a result on Galois openness in~\cite{Maurischat.2019}.
  There, he  defines a density for the representation~\cite[definition 3.1]{Maurischat.2019} and relies the $t$-adically non-openness to this density~\cite[proposition 3.2]{Maurischat.2019}. Note that our main example, the Carlitz-module~\ref{ex: Carlitz introduction}, has density $1$, which coincides with our result.\\
  These objects can be slightly generalized to \Am s.
  To define them,  let $\Fq$ be a finite filed with $q$ elements and odd prime characteristic $p$. Then let $\MC$ be a smooth projective and geometrically irreducible curve over $ \F{q} $ and  let $ \infty $ be a fixed closed point of $\MC$. Denote by  $ A:=\Gamma(\MC\setminus \{\infty\}, \MOC) $ the ring of regular functions of $ \MC $ outside $ \infty $ and by $ Q=\F{q}(\mathcal{C}) $ its fraction field.\\
  Let $ L $ be a   finite field extension of $ Q $, fixed in the algebraic closure $ Q^{alg} $ of $ Q $ and set $ A_L= A \otimes_{\Fq} L $ and $ \sigma = id_A \otimes Frob_{q,L} $.  We consider for an $ A_L $-module $ M $  the pullback   $ \sigma^*M= M\otimes_{A_L, \sigma} A_L$ and for a $A_L $-homomorphism $ f:M\to N $  the pullback homomorphism
  $ \sigma^*f:= f \otimes id_{A_L}:\sigma^*M\to \sigma^*N$.  Set $ \gamma:A \to L  $ for the embedding of $ A\subset Q\subset L $ and consider $ J= (a\otimes1-1\otimes\gamma(a)|a\in A )$ as the maximal ideal in $ A_L $. Then $ \gamma $  can be reconstructed as $ A\to A_L/ J = L $ with $  a\mapsto a \otimes 1 (\mathrm{mod} J )$. Thus we define the  \emph{$ A $-characteristic of $ L $} to be the ideal $ \epsilon:=\ker\gamma \subseteq A$.
  \begin{defi}\label{def:Amot}
  	An \emph{\Am \ $ \underline{M} $ over $ L $} is a pair $ (M, \tau_M) $ consisting  of a locally free $ A_L $-module $ M $ of rank $ r $ and an isomorphism
  	\begin{align*}
  	\tau_M: \sigma^*M\lvert_{\Spec A_L \backslash V(J)} \to M\lvert_{\Spec A_L \backslash V(J)}
  	\end{align*} of the associated sheaves outside $ V(J)\subset \Spec A_L $.\\
  	The \emph{rank} of $ \underline{M} $ is the rank of the $ A_L $-module $ M $ and  denoted by  $ \rk \underline{M} $. The \emph{(virtual) dimension} $\dim\underline{M}$ of $\underline{M}$ is defined as
  	\begin{align*}
  	\dim_A\underline{M}\;:=\;\dim_L \,M\big/(M\cap\tau_M(\sigma^\ast M))\;-\;\dim_L \,\tau_M(\sigma^\ast M)\big/(M\cap\tau_M(\sigma^\ast M))\,.
  	\end{align*}
  \end{defi}
  A morphism of \Am s $ f:\underline{M} \to \underline{N}  $ is an $ A_L $-homomorphism $ f:M\to N $ satisfying $ f\circ \tau_M=\tau_N\circ \sigma^*f $. We denote by $ \Hom{\underline{M}}{\underline{N} }{L} $ the set of morphism.
  An \Am \  $ \underline{M} $ is called \emph{effective} if $ \tau_M $ comes from an $ A_L $-homomorphism $ \sigma^*M\to M $. An effective \Am \ has virtual dimension $ d\geq 0 $  { given by the dimension of  $ \mathrm{coker }\tau_M $}.\\
  With an \Am \ one can associate  $v$-adic Galois representations in the following way:
  Let $ A_v$ be the $ v$-adic completion of $ A $ at a closed point $v\neq \infty$ of $ \mathcal{C} $ and let $ Q_v $ be it's  fraction field. The \Am \ has an associated  \emph{$v$-adic \'{e}tale realization $\Heins{v}{M,A_v}$} (Tate-module) defined by
  \begin{align*}
  \Heins{v}{\underline{M},A_v}:=\check{T}_v\underline{M}:=({M}\otimes_{A_{L}} A_{v,L^{sep}})^\tau\\
  \Heins{v}{\underline{M},Q_v}:=\check{V}_v\underline{M}:=({M}\otimes_{A_{L}} A_{v,L^{sep}})^\tau \otimes_{A_v} Q_v;
  \end{align*} see definition~\ref{Def: Tate modules} and example~\ref{ex: associated shtuka} for the definition of $A_{v,L}$.
  Then Urs Hartl and Wansu Kim show in \cite[Prop. 4.2]{Hartl.Kim.2015} that this defines a contravariant functor to $\KatMod{A_v}$ and respectively $\KatVec{Q_v}$. Furthermore  $\Heins{v}{\underline{M},A_v}$ has a continuous Galois action by $ \AbsGalois{L}:= \Gal(L^{sep}/L)$, see \cite[Prop 6.1]{Taguchi.Wan.1996}. So, after a choice of $A_v$-basis for $\Heins{v}{M,A_v}$ we  can construct:
  \begin{align*}
  \GR{\underline{M},v}: \AbsGalois{L}\to \mathrm{Aut}_{A_v}(\check{T}_v\underline{M})\cong\mathrm{GL}_r({A_v})
  \end{align*}
  This factors through it's Zariski closure $H_{\underline{M}}$ in $A_v$, its \emph{'monodromy group'}. This group is also an algebraic group over $Q_v$, see for example~\cite[Lem. 4.2]{Hartl.Pal.2018}. The   image of the $v$-adic Galois representation $\GR{\M}$ attached to the $v$-adic \etale \ realization (Tate-module) can be a non-open in its Zariski closure. { Note that also in the number field case, a compact subgroup can be non-open in its Zariski closure if the closure is not semisimple, see~\cite{MO.CompactSubgroups}.}
  The aim of this paper is to give a criterion for this phenomena for rank one $A$-motives and conclude a  condition for the Galois openness of $A$-motives of higher rank.
  \subsection*{Overview}
  We start in the second and third section with a brief introduction to a local object associated to an \Am, a so-called \emph{local shtuka}. Here we  define Tate-modules and their associated Galois representation. Then section four deals with the Tannakian theory of the category of shtukas. We  also introduce the construction of Tannakian lattices and their associated Tannakian theory. In the last section, we formulate the main results of this paper. If $R$ is a valuation ring of a complete, discretely valued field, then they are as follows:
  \begin{heorem}\textbf{\normalfont{ \ref{prop: Openness rank one shtuka}.}}    Let $ \M $ be a local shtuka of rank $1$ and virtual dimension $d>0$ over $R$ and $K=\mathrm{Frac}(R)$. Let $ d'  \in \Z$ be the largest number such that $ d=p^e\cdot d' $ holds for $p=\mathrm{char}(\Fq)$, $ p \nshortmid d' $ and $e\geq 0$.
  	Then $\GR{\M}(\AbsGalois(K))$ is contained and open in $\mathrm{GL}_1(\F{v}\Laurrentreihe{z^{p^e}})$. In particular, $\GR{\M}(\AbsGalois{K})$ is open in $\HalgGp{\M}(Q_v)$ if and only if $p\nmid d$.
  \end{heorem}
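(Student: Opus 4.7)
The plan is to normalise $\M$, separate an étale twist, and then harvest the containment and openness in $\GL_1(\F{v}\Laurrentreihe{z^{p^e}})$ from a Frobenius identity in characteristic~$p$.

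\textbf{Normal form and character factorisation.} Choose a basis $m$ of the underlying module of $\M$ over $\Rz$, so that $\tau_{\M}(\sigma^\ast m) = c\cdot m$ for some $c \in \Rz$. The virtual-dimension hypothesis forces $v_z(c) = d$, hence $c = u\cdot z^d$ with $u \in \Rz^\times$. Inside the Tannakian category of rank-one local shtukas from Section~4, this gives a decomposition $\M \cong \underline{N}\otimes \underline{M}_z^{\otimes d}$, where $\underline{N}$ is the étale rank-one shtuka with $\tau = u$ and $\underline{M}_z$ is the model rank-one shtuka with $\tau = z$ of virtual dimension~$1$. Functoriality of the Tate-module construction \cite[Prop.~4.2]{Hartl.Kim.2015} then yields the character identity
\begin{equation*}
\chi_{\M} \;=\; \chi_{\underline{N}}\cdot \chi_{\underline{M}_z}^{\,d}\qquad\text{in } A_v^\times = \F{v}\Laurrentreihe{z}^\times.
\end{equation*}
The étale factor $\chi_{\underline{N}}$ trivialises on a finite separable extension of $K$ and takes values in $\F{v}^\times \subset \F{v}\Laurrentreihe{z^{p^e}}^\times$, so it obstructs neither the containment nor the openness once we restrict to a suitable open subgroup of $\AbsGalois{K}$.

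\textbf{Openness of $\chi_{\underline{M}_z}$ and the Frobenius identity.} I would construct, in the appropriate completion of $R^{sep}\Laurrentreihe{z}$ from Section~3, a period $\lambda$ generating the Tate module of $\underline{M}_z$, and show that $\chi_{\underline{M}_z}\colon\AbsGalois{K}\to A_v^\times$ has open image --- the analogue of Pink's openness theorem for the Carlitz module, reduced via the Tannakian setup of Section~4 to a Kummer-style density argument. With $d = p^e d'$ and $\gcd(d',p) = 1$, the identity
\begin{equation*}
\chi_{\underline{M}_z}^{\,d} \;=\; \bigl(\chi_{\underline{M}_z}^{\,d'}\bigr)^{p^e}
\end{equation*}
combined with perfection of $\F{v}$ yields the containment: the $p^e$-th power map on $\F{v}\Laurrentreihe{z}^\times$ acts by $\sum a_i z^i \mapsto \sum a_i^{p^e} z^{ip^e}$ and has image exactly $\F{v}\Laurrentreihe{z^{p^e}}^\times$. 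For openness, the coprimality $\gcd(d',p)=1$ makes the $d'$-th power map a topological automorphism of $1+z\F{v}\Laurrentreihe{z}$, while the $p^e$-th power map is a homeomorphism from $1+z\F{v}\Laurrentreihe{z}$ onto $1+z^{p^e}\F{v}\Laurrentreihe{z^{p^e}}$; chaining these carries an open subgroup of $\F{v}\Laurrentreihe{z}^\times$ to an open subgroup of $\F{v}\Laurrentreihe{z^{p^e}}^\times$. The \emph{in particular} clause follows because $\F{v}\Laurrentreihe{z^{p^e}}^\times$ is open in $A_v^\times = \F{v}\Laurrentreihe{z}^\times$ if and only if $e = 0$, i.e.\ $p\nmid d$; for $e\geq 1$ the inclusion has infinite index.

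\textbf{Main obstacle.} The technical core lies in the second stage: producing the period $\lambda$ and proving openness of $\chi_{\underline{M}_z}$. The naive power-series recursion for $\lambda$ in $R^{sep}\Laurrentreihe{z}$ either forces arbitrarily large $z$-divisibility or demands Laurent tails, so one must work in a carefully chosen completion adapted to positive virtual dimension; and the openness of $\chi_{\underline{M}_z}$ rests on a Kummer/density analysis of the period field's ramification, where the Tannakian results of Section~4 do the principal work. Once this is in hand, the characteristic-$p$ arithmetic in the third step is essentially formal.
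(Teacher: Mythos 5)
Your normal form is wrong, and the error propagates into what you call the main obstacle. The virtual dimension of a rank-one local shtuka is the order of $\det\tau_{\hat M}$ at $z-\zeta$, not at $z$: by Lemma~\ref{Lemma: dimension} and Corollary~\ref{Cor:Form local shtuka} one has $\M\cong(\Rz,\,c\cdot(z-\zeta)^d)$ with $c\in\Rz^\times$, so the correct dimension-one model is the Carlitz shtuka $\Carlitz=(\Rz,\,z-\zeta)$, not your $\underline{M}_z$ with $\tau=z$. In fact $(\Rz,z)$ is not a local shtuka at all, since $z$ is not invertible in $\Rz[\frac{1}{z-\zeta}]$ (no power of $z-\zeta$ is divisible by $z$), so the tensor factorisation you write down does not exist in the category. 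This is also why your period recursion misbehaves (``arbitrarily large $z$-divisibility or Laurent tails''): with the correct factor $(z-\zeta)^d$ the period is the standard $l_+$ solving the Carlitz--Tate equations~\eqref{eq: Carlitz-Tate equation} of Example~\ref{ex: Carlitz tate module}, and the openness (indeed surjectivity onto $\F{v}\Laurrentreihe{z}^\times$, also after passing to $K^{un}$) of $\GR{\Carlitz}$ is already recorded in Example~\ref{Ex: Carlitz representation}; it is an input to the theorem, not the hard part you defer. Once repaired, your containment mechanism --- $\chi^{d}=(\chi^{d'})^{p^e}$ takes values in the set of $p^e$-th powers, which by perfectness of $\F{v}$ is exactly $\F{v}\Laurrentreihe{z^{p^e}}^\times$ --- is a genuinely more elementary route than the paper's, which instead rewrites $(z-\zeta)^{p^ed'}=(z^{p^e}-\zeta^{p^e})^{d'}$, views $\M$ as a pullback along $z'\mapsto z^{p^e}$ and invokes the Tannakian lattice base change of Corollary~\ref{Cor:Faser von LGM}; likewise your observation that $x\mapsto x^{d'}$ is bijective on the pro-$p$ part of the units (and of finite cokernel overall) replaces the paper's level-by-level index-$\leq d'$ computation with the towers $\overline{K}^{un,d'}_n\subset\overline{K}^{un}_n$.

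The second genuine gap is your treatment of the \'etale factor. It is false that $\chi_{\underline{N}}$ trivialises on a finite separable extension of $K$ or takes values in $\F{v}^\times$: an \'etale rank-one local shtuka gives an \emph{unramified} character whose value on Frobenius is essentially an arbitrary unit of $\F{v}\Laurrentreihe{z}$ (take $\tau=1+z$, where the value is of infinite order and not a $p^e$-th power), and it becomes trivial only on $\AbsGalois{K^{un}}$, which is not of finite index. Hence restricting to an open subgroup of $\AbsGalois{K}$ does not dispose of it, and as written your argument does not yield the containment of the image in $\mathrm{GL}_1(\F{v}\Laurrentreihe{z^{p^e}})$, only of its restriction to inertia. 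The paper deals with this by base-changing to $R^{un}$: Proposition~\ref{prop:Form local rank 1 shtuka} trivialises the unit $c$ over $R^{un}$, so that $\M\otimes_R R^{un}\cong\Carlitz^{\otimes d}$, and both the containment and the openness are then established over $\overline{K}^{un}$, openness for $\AbsGalois{K}$ following because it contains $\AbsGalois{K^{un}}$. You need this passage to inertia (or some other control of the unramified twist) to close your argument; with it, and with the Carlitz model in place of $\underline{M}_z$, your Frobenius-power argument goes through.
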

  \begin{rop}\textbf{\normalfont{ \ref{prop: openes for higher r}.}} Let $\M$ be a local shtuka of rank $r\geq1$  and dimension $d$ over $R$.\\
  	If $\mathrm{char}(\F{q}) | d$ then ${\GR{\M}(\AbsGalois{K})}\subseteq \mathrm{GL}_r(\F{v}\Laurrentreihe{z})$ is not open.
  \end{rop}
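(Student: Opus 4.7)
The plan is to reduce this higher-rank statement to the rank-one Theorem~\ref{prop: Openness rank one shtuka} by passing to the determinant shtuka. First, form $\det\M := \bigwedge^r \M$, a local shtuka of rank $1$ which, by the additivity of virtual dimension under exterior powers, still has dimension $d$. Writing $d = p^e d'$ with $p \nmid d'$, the hypothesis $p \mid d$ forces $e \geq 1$, so the rank one theorem places the image of $\GR{\det\M}$ inside $\mathrm{GL}_1(\F{v}\Laurrentreihe{z^{p^e}})$. This latter is a proper closed subgroup of $\mathrm{GL}_1(\F{v}\Laurrentreihe{z})$ with empty interior: every standard neighborhood $1 + z^N \F{v}\Laurrentreihe{z}$ of the identity contains units of the form $1+z^k$ with $p^e \nmid k$, which do not lie in $\F{v}\Laurrentreihe{z^{p^e}}^\times$.

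Next I would identify $\GR{\det\M}$ with $\det \circ \GR{\M}$. The Tannakian compatibility of the Tate functor $\check{T}_v$ with wedge products, recalled in Section~4, furnishes a Galois-equivariant isomorphism
\[ \check{T}_v(\det\M) \;\cong\; {\textstyle\bigwedge^r}\check{T}_v\M, \]
under which the action of $\GR{\det\M}$ is precisely the composition $\det \circ \GR{\M}$, where $\det\colon \mathrm{GL}_r(\F{v}\Laurrentreihe{z}) \to \mathrm{GL}_1(\F{v}\Laurrentreihe{z})$ is the usual determinant homomorphism.

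The proof then finishes by contradiction. Suppose $\GR{\M}(\AbsGalois{K})$ is open in $\mathrm{GL}_r(\F{v}\Laurrentreihe{z})$. The determinant is an open map, because the assignment $a \mapsto \mathrm{diag}(a,1,\dots,1)$ provides a continuous section. Consequently the image
\[ \GR{\det\M}(\AbsGalois{K}) \;=\; \det\bigl(\GR{\M}(\AbsGalois{K})\bigr) \]
would be open in $\mathrm{GL}_1(\F{v}\Laurrentreihe{z})$, contradicting its containment in a subgroup with empty interior established in the first step.

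The step I expect to demand the most care is the Galois-equivariant identification $\check{T}_v(\det\M) \cong \bigwedge^r \check{T}_v \M$ at the level of $A_v[\AbsGalois{K}]$-modules, since this is the bridge that transports the rank one obstruction up to rank $r$. It ought to drop out cleanly from the Tannakian formalism developed in Section~4, while the additivity of virtual dimension under exterior powers and the topological fact that $\mathrm{GL}_1(\F{v}\Laurrentreihe{z^{p^e}})$ has empty interior in $\mathrm{GL}_1(\F{v}\Laurrentreihe{z})$ are elementary verifications.
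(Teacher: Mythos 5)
Your proposal is correct and follows essentially the same route as the paper: pass to the rank-one determinant shtuka $\Lambda^r\M$ (same dimension $d$), apply Theorem~\ref{prop: Openness rank one shtuka} to confine its Galois image to $\F{v}\Laurrentreihe{z^{p^e}}^\times$, and derive a contradiction from openness of $\GR{\M}$ via determinants of matrices $\mathrm{diag}(1+z^m h,1,\dots,1)$ — exactly the device the paper uses in its commutative diagram argument. The identification $\GR{\Lambda^r\M}=\det\circ\GR{\M}$ that you flag is likewise taken from the tensor-functoriality of $\TateTFunc{}$ in the paper, so no genuinely new ingredient separates the two proofs.
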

  \begin{orollary}\textbf{\normalfont{ \ref{Cor: Amot is not open}.}}
    Let $ \underline{M} $ be a  \Am \ of rank $r$  and  dimension $d$ over a finite field extension $L$ over $Q$.
	  If $\mathrm{char}(\F{v}) | d$ then ${\GR{\M}(\AbsGalois{K})}\subseteq \mathrm{GL}_r(\F{v}\Laurrentreihe{z})$ is not open.
  \end{orollary}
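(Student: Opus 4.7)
The plan is to deduce the statement directly from Proposition~\ref{prop: openes for higher r} by associating to the $A$-motive $\underline{M}$ its local shtuka at $v$. Fix a place $w$ of $L$ lying above $v$, let $K = L_w$ be the completion and $R = \MOK$ the corresponding valuation ring. Base-change of $\underline{M}$ to $R$ together with $v$-adic completion of the coefficient ring yields a local shtuka $\hat{M}$ over $R$ of rank $r$, sitting inside $M \otimes_{A_L} A_{v,R}$. Its virtual dimension equals $d$: the cokernel of $\tau_M$ is concentrated on $V(J)\subseteq \Spec A_L$, and passage to the $v$-adic completion captures exactly the $v$-primary part of this cokernel with the same length, so the local invariant coincides with $\dim \underline{M}=d$.

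Next I would record the compatibility of Tate modules. By the construction of the $v$-adic \'etale realization recalled after Definition~\ref{Def: Tate modules}, the module $\check{T}_v \underline{M}$ is canonically isomorphic to the Tate module of the local shtuka $\hat{M}$, and this isomorphism intertwines the restriction of $\GR{\underline{M},v}$ along the embedding $\AbsGalois{K}\hookrightarrow \AbsGalois{L}$ (viewing $\AbsGalois{K}$ as the decomposition group at $w$) with the Galois representation of $\hat{M}$. In particular, $\GR{\M}(\AbsGalois{K})$ in the statement is identified with the Galois image of the local shtuka $\hat{M}$ over $R$.

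Since $\F{v}$ is a finite extension of $\F{q}$, $\mathrm{char}(\F{v})=\mathrm{char}(\F{q})=p$, so the hypothesis reads $p\mid d$. Applying Proposition~\ref{prop: openes for higher r} to the local shtuka $\hat{M}$ of rank $r$ and virtual dimension $d$ over $R$ then gives that $\GR{\hat{M}}(\AbsGalois{K})$ is not open in $\mathrm{GL}_r(\F{v}\Laurrentreihe{z})$, which, under the identification above, is precisely the desired conclusion.

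The chief point requiring care is the first step: verifying that completing a (possibly non-effective) $A$-motive at $v$ genuinely produces a local shtuka and that the virtual dimension is preserved under the completion, in particular at the $A$-characteristic place $\epsilon$ where the nontrivial structure of $\tau_M$ is concentrated. Once this is established, the corollary is a formal consequence of the already-proved proposition for local shtukas.
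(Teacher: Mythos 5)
The paper prints no actual proof of this corollary; it only remarks that ``the same reducing technique as in the previous proof'' extends Proposition~\ref{prop: openes for higher r} to $A$-motives, so your proposal has to be measured against that intended argument. Your preliminary steps are sound and are indeed available in the paper: the associated local shtuka $\underline{\hat{M}}_v(\underline{M})$ over the valuation ring $R$ of $K=L_w$ has the same rank and dimension as $\underline{M}$ (stated right after Definition~\ref{def: associated shtuka}), and its Tate module is isomorphic to $\check{T}_v\underline{M}$ compatibly with the $\AbsGalois{L_w}$-action (Section~\ref{sec: Tate modules and Galois representation}, citing Hartl--Kim), and $\mathrm{char}(\F{v})=p$. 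Applying Proposition~\ref{prop: openes for higher r} then shows that the image of the decomposition group $\AbsGalois{L_w}$ is not open in $\mathrm{GL}_r(\F{v}\Laurrentreihe{z})$. If one reads the corollary literally --- $\GR{\M}(\AbsGalois{K})$ with $K=L_w$, the notation being copied from Proposition~\ref{prop: openes for higher r} --- this is exactly the assertion and your argument is complete.

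However, the corollary is evidently meant as the higher-rank analogue of the rank-one theorem for $A$-motives immediately preceding it, i.e.\ as a statement about the image of the full group $\AbsGalois{L}$. Under that reading your argument has a genuine gap: non-openness of $\GR{\underline{M},v}(\AbsGalois{L_w})$ does not imply non-openness of $\GR{\underline{M},v}(\AbsGalois{L})$, because the decomposition group is a closed, non-open subgroup and the global image can be strictly larger than the local one; the local-to-global passage in the rank-one proof relied on commutativity of $\F{v}\Laurrentreihe{z}^\times$ together with generation of $\AbsGalois{L}$ by conjugates of decomposition groups, and that argument has no analogue inside the non-commutative $\mathrm{GL}_r$. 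The reduction the paper gestures at runs the other way, through the determinant: $\Lambda^r\underline{M}$ is a rank-one $A$-motive of dimension $d$, so by the preceding rank-one theorem its image $\GR{\Lambda^r\underline{M},v}(\AbsGalois{L})$ lies in $\F{v}\Laurrentreihe{z^{p^e}}^\times$ with $e\geq 1$ when $p\mid d$, which is not open in $\F{v}\Laurrentreihe{z}^\times$; if $\GR{\underline{M},v}(\AbsGalois{L})$ were open in $\mathrm{GL}_r(\F{v}\Laurrentreihe{z})$, the matrix computation in the proof of Proposition~\ref{prop: openes for higher r} (determinants of the elements $1+z^m h$ placed in a corner) would force the determinant image to be open, a contradiction. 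So you should either state explicitly that you only prove the decomposition-group version, or replace your final step by this global determinant argument, which is what ``the same reducing technique'' refers to.
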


 \section{Local shtukas}
    In this section we introduce local shtukas which can be attached to \Am s. When we define the Tate module of an \Am \ this structure is somewhat in between. However, in contrast to the Tate module, it can be defined for all places of $\MC$. So we will first give a general introduction to local shtukas and will explain their connection to \Am s in example~\ref{ex: associated shtuka}.\\
    Let $A$ and $Q$ be  as in the introduction. Let $K$ be a field, which is complete with respect to a non-trivial and non-archimedian absolute value $|\cdot|\!:K\to\R_{\geq 0}$. Assume further that  its value group $|K^\times|$ is discrete in $\R_{\geq 0}$ and let $K^{alg}$ and $K^{sep}$ a fixed algebraic and separable closure of $K$.
    Let $R=\{x \in K: |x|\leq 1 \}$ be the valuation ring of $K$, $\mathfrak{m}_R=\{x \in K: |x|< 1 \}$ be the maximal ideal of $R$ and $k:= R/\mathfrak{m}_R$ its residue field. Let $\gamma:A \to R$ be  an injective ring morphism from $A$ to $R$.  It induces a homomorphism $\ A \to k$. Denote by $v= \gamma^{-1}(\mathfrak{m}_R) $ its induced kernel. We assume that $v$ is a maximal ideal.
    For example $K$ can be the completion of a finite field extension $L$ of $Q$ at a place above $v$.\\
    Denote by $\F{v}:=A/v$  the residue field of $A$ at $v$ and set $q_v:= \# \F{v}$. We fix a uniformizing parameter $z:= z_v $ at $ \mathcal{O}_{\mathcal{C},v} $. Then there exists a canonical isomorphism between the completion of $\mathcal{O}_{\mathcal{C},v}$,  denoted by $A_v$, and   $\F{v}\Laurrentreihe{z} $, see \cite[Lem. 1.2 \& 1.3]{Hartl.Juschka.2016}.
    The canonical isomorphism also implies   $Q_v\cong\F{v}\DoppelRund{z}$. The homomorphism $\gamma$  extends by continuity  to a homomorphism  $A_v\to R$ and set  $\zeta:= \gamma(z)$. \\
    Let $\Rz$ be the ring of formal power series in the variable $z$ over $R$. We set $ \hat{\sigma}:= \sigma^{[\F{v}:\Fq]}$ as the endomorphism with $\hat{\sigma}(z)=z$ and $ \hat{\sigma}(a)= a^{q_v}$ for $a \in R$.
    For a $\Rz$-module $M$ set $\hat{\sigma}^*M:= M\otimes_{\Rz,\hat{\sigma}}\Rz$, $M[{\frac{1}{z-\zeta}}]:= M\otimes_{\Rz}\Rz [{\frac{1}{z-\zeta}}]$  and also $M[{\frac{1}{z}}]:= M\otimes_{\Rz}\Rz [{\frac{1}{z}}]$.
    \begin{defi}\label{def: shtukas}
    	A \emph{local $\hat{\sigma}$-shtuka (or local shtuka)} over $R$ of rank $r$ is a pair $\M=(\hat{M},\tau_{\hat{M}})$ consisting of a  free $\Rz$-module $\hat{M}$ of rank $r$ and an isomorphism
    	\begin{align*}
    	\tau_{\hat{M}}:\hat{\sigma}^*\hat{M}\left[ {\textstyle \frac{1}{z-\zeta}} \right]\xlongrightarrow{\sim} \hat{M}\left[{\textstyle\frac{1}{z-\zeta}}\right].
    	\end{align*}
    	A \emph{morphism} of local shtukas $f: \M=(\hat{M},\tau_{\hat{M}})\to \N=(\hat{N},\tau_{\hat{N}})$ over $R$ is a morphism of $\Rz$-modules $f:\hat{M}\to \hat{N}$ satisfying $ \tau_{\hat{N}} \circ \hat{\sigma}^*f=f\circ \tau_{\hat{M}}$. We denote the set of morphism by $\Hom{\M}{\N}{R}$. The category of local shtukas over $R$ is denoted $\KatLshtuka{A_v,R}$\\
    	A \emph{quasi-morphism} between local shtukas $f: \M=(\hat{M},\tau_{\hat{M}})\to \N=(\hat{N},\tau_{\hat{N}})$ over $R$ is a morphism of $R\DoppelRund{z}$-modules
    	$f:\hat{M}[\frac{1}{z}]\to \hat{N} [\frac{1}{z}]$ satisfying $ \tau_{\hat{N}} \circ \hat{\sigma}^* f=f\circ \tau_{\hat{M}}$.
    	A quasi-morphism is called \emph{quasi-isogeny}  if it is an isomorphism of $R\DoppelRund{z}$-modules.
    	We denote the set of quasi-morphism by $\QHom{\M}{\N}{S}$. The category with  local shtukas as objects and $\QHom{\M}{\N}{S}$ as Hom-set is denoted by $\KatLshtuka{Q_v,R}$.\\
    	If $\tau_{\hat{M}}(\hat{\sigma}^*\hat{M})\subseteq \hat{M}$ then $\M$ is called \emph{effective}. If furthermore  $\tau_{\hat{M}}(\hat{\sigma}^*{\hat{M}})=\hat{M}$ holds, then $\M$  is called \emph{\etale}.
    \end{defi}
    By \cite[Cor. 4.5]{Hartl.Kim.2015} $\Hom{\M}{\N}{S}$ is a finite free $A_v$-module of rank at most $\rk \M \cdot \rk \N$. Furthermore, $ \QHom{\M}{\N}{S}=\Hom{\M}{\N}{S}\otimes_{A_v} Q_v$ holds.
    \begin{Lemma}(\cite[Lem. 2.3.]{Hartl.Kim.2015})\label{Lemma: dimension}
    	Let $\M=(\hat{M},\tau_{\hat{M}})$ be a local shtuka of rank $r$ over  $R$.
    	\begin{enumerate}
    		\item There is an integer $d\in \Z$ such that $\mathrm{det}\tau_{\hat{M}} \in (z-\zeta)^d R\Laurrentreihe{z}$
    		\item If $\M$ is effective the integer from (1) satisfies $d\geq 0$ and $\hat{M}/\tau_{\hat{M}}(\hat{\sigma}^*\hat{M})$ is a free $R$-module of rank $d$ which is annihilated by $(z-\zeta)^d$.
    	\end{enumerate}
    \end{Lemma}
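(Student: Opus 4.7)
The plan for (1) is to pick an $R\Laurrentreihe{z}$-basis of $\hat M$ and of $\hat\sigma^\ast\hat M$ and express $\tau_{\hat M}$ as a matrix $T\in\mathrm{GL}_r\bigl(R\Laurrentreihe{z}[\tfrac{1}{z-\zeta}]\bigr)$; then $\det T$ is a unit in $R\Laurrentreihe{z}[\tfrac{1}{z-\zeta}]$. The claim reduces to showing that every such unit has the form $(z-\zeta)^d\cdot u$ with $d\in\Z$ and $u\in R\Laurrentreihe{z}^\times$. The key input is that $R\Laurrentreihe{z}$ is a UFD (being a complete two-dimensional regular local ring) in which $(z-\zeta)$ is prime, since $R\Laurrentreihe{z}/(z-\zeta)\cong R$ (via $z\mapsto\zeta$) is a domain. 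For a unit $u=f/(z-\zeta)^n$ with inverse $g/(z-\zeta)^m$, the equation $fg=(z-\zeta)^{n+m}$ together with unique factorisation forces $f=(z-\zeta)^a f'$ and $g=(z-\zeta)^b g'$ with $f',g'\in R\Laurrentreihe{z}^\times$, so $u=(z-\zeta)^{a-n}f'$. A change of basis multiplies $\det T$ by an element of $R\Laurrentreihe{z}^\times$, so the exponent $d$ is independent of the chosen bases.

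For (2), effectivity means $T\in M_r(R\Laurrentreihe{z})$, so $\det T\in R\Laurrentreihe{z}$ forces $d\geq 0$. The adjoint identity $T\cdot\mathrm{adj}(T)=(\det T)\cdot I$ shows that $\det T$, and hence $(z-\zeta)^d$, annihilates $C:=\hat M/\tau_{\hat M}(\hat\sigma^\ast\hat M)=\mathrm{coker}(T)$. Since $C$ is a quotient of $(R\Laurrentreihe{z}/(z-\zeta)^d)^r$, and since the filtration $(z-\zeta)^i/(z-\zeta)^{i+1}\cong R\Laurrentreihe{z}/(z-\zeta)\cong R$ exhibits $R\Laurrentreihe{z}/(z-\zeta)^d$ as a free $R$-module of rank $d$, the module $C$ is finitely generated over $R$.

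Finally, to get that $C$ is free of rank $d$ over $R$, I would first argue $R$-torsion-freeness: if $\pi$ denotes a uniformiser of $R$, then $\pi$-torsion in $C$ amounts to an element of $k\Laurrentreihe{z}^r$ in the kernel of $T\bmod\pi$. But $\det(T\bmod\pi)=z^d\bar u$ is nonzero in the domain $k\Laurrentreihe{z}$, using $\overline{z-\zeta}=z$ because $\zeta\in\mathfrak m_R$, so $T\bmod\pi$ is injective. A finitely generated torsion-free module over the DVR $R$ is free, so $C$ is free; its rank is read off from $C\otimes_R k\cong k\Laurrentreihe{z}^r/\bar T\, k\Laurrentreihe{z}^r$. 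Over the DVR $k\Laurrentreihe{z}$, Smith normal form diagonalises $\bar T$ (up to units in $\mathrm{GL}_r(k\Laurrentreihe{z})$) as $\mathrm{diag}(z^{d_1},\dots,z^{d_r})$ with $\sum d_i=d$, so the cokernel has $k$-dimension $d$. This matches the $R$-rank of $C$ and finishes the proof. The main technical hurdle is exactly this last step: the two-dimensional regular ring $R\Laurrentreihe{z}$ does not admit Smith normal form, and one must cross to the one-dimensional special fibre $k\Laurrentreihe{z}$ to make the elementary-divisor calculation and then lift the rank back to $R$.
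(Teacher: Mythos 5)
Your proof is correct. Note that the paper does not prove this lemma at all---it is imported verbatim from Hartl--Kim \cite[Lem.~2.3]{Hartl.Kim.2015}---so there is no internal argument to compare against; what you give is essentially the standard proof of the cited result: primality of $(z-\zeta)$ in the regular local ring $R\Laurrentreihe{z}$ yields the sharper (and actually needed, for $d$ to be well defined) statement $\det\tau_{\hat M}\in(z-\zeta)^d R\Laurrentreihe{z}^\times$, the adjugate identity gives the annihilation, and reduction modulo a uniformiser of $R$ (using $\zeta\in\mathfrak m_R$, so $\overline{z-\zeta}=z$) gives torsion-freeness, with the rank computed by elementary divisors over the discrete valuation ring $k\Laurrentreihe{z}$. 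All the individual steps check out, so nothing further is required.
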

    \begin{defi}
    	The integer $d$ defined by the above lemma is called the \emph{dimension of  $\M$} and denoted by $\mathrm{dim }\M$.
    \end{defi}
    \begin{Corollary}\label{Cor:Form local shtuka}
    	Every  local shtuka $\M$ over $R$ is isomorphism to the tensor product of a power of the Carlitz shtuka over $R$ and an effective local shtuka over $R$. In the case of rank $1$ local shtukas, we have $\M=(R\Laurrentreihe{z},c \cdot (z-\zeta)^d)$ for a $c \in R\Laurrentreihe{z}^\times$.
    \end{Corollary}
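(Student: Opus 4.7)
My plan is to first settle the rank one assertion, since the general decomposition will follow from it by using a suitable power of the Carlitz shtuka $\Carlitz=(R\Laurrentreihe{z},z-\zeta)$ to absorb the poles of $\tau_{\hat{M}}$.

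For the rank one statement, $\hat{M}$ is free of rank one, so after fixing a basis $\tau_{\hat{M}}$ is multiplication by a single unit $\tau\in R\Laurrentreihe{z}[\tfrac{1}{z-\zeta}]^{\times}$. I would show that every such unit factors as $c\cdot(z-\zeta)^{d}$ with $c\in R\Laurrentreihe{z}^{\times}$ and $d\in\Z$. Writing $\tau=a/(z-\zeta)^{n}$ with $a\in R\Laurrentreihe{z}$ and $n\geq 0$, the unit condition produces $b\in R\Laurrentreihe{z}$ and $m\geq 0$ with $ab=(z-\zeta)^{n+m}$. Since $R$ is a complete discrete valuation ring and $\zeta\in\mathfrak{m}_R$, the Weierstrass preparation theorem applies in $R\Laurrentreihe{z}$: write $a=u_a\cdot p_a$ and $b=u_b\cdot p_b$ with $u_a,u_b\in R\Laurrentreihe{z}^{\times}$ and $p_a,p_b$ distinguished polynomials in $R[z]$. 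Uniqueness of Weierstrass factorisation, combined with the fact that $z-\zeta$ is itself a distinguished polynomial that is irreducible by degree, forces $p_a=(z-\zeta)^{k}$ for some $0\leq k\leq n+m$. Thus $\tau=u_a\cdot(z-\zeta)^{k-n}$, as claimed, with $d:=k-n$; Lemma~\ref{Lemma: dimension} then identifies this $d$ with $\dim\M$.

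For the general decomposition, choose any $R\Laurrentreihe{z}$-basis of $\hat{M}$ and consider the matrix of $\tau_{\hat{M}}$, whose entries all lie in $R\Laurrentreihe{z}[\tfrac{1}{z-\zeta}]$. Pick $N\geq 0$ large enough to clear every denominator, so that $(z-\zeta)^{N}\,\tau_{\hat{M}}(\hat{\sigma}^{\ast}\hat{M})\subseteq\hat{M}$. The tensor product $\M':=\Carlitz^{\otimes N}\otimes\M$ has, under the canonical identification $R\Laurrentreihe{z}\otimes_{R\Laurrentreihe{z}}\hat{M}=\hat{M}$, the map $\tau_{\M'}=(z-\zeta)^{N}\tau_{\hat{M}}$, which lands in $\hat{M}$. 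Therefore $\M'$ is effective, and rewriting gives the desired factorisation $\M\cong\Carlitz^{\otimes(-N)}\otimes\M'$ as the tensor of a power of the Carlitz shtuka with an effective local shtuka.

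The only genuinely delicate point is the characterisation of units in $R\Laurrentreihe{z}[\tfrac{1}{z-\zeta}]$ for the rank one case. This is handled cleanly by the completeness of $R$ via Weierstrass preparation, or equivalently by the fact that $R\Laurrentreihe{z}$ is a two-dimensional regular local ring and hence a unique factorisation domain in which $z-\zeta$ is a prime element. Once this is available, the rank one formula is immediate and the general reduction to the effective case is pure bookkeeping with the $\tau$-matrix.
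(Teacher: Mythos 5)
Your proof is correct, and for the main decomposition it follows the same route as the paper: twist $\M$ by a suitable power of the Carlitz/Tate object $\mathbb{1}(N)=\Carlitz^{\otimes N}$ so that the poles of $\tau_{\hat M}$ along $z-\zeta$ are absorbed and the twist becomes effective, then untwist. The paper's own proof is a one-liner that cites Lemma~\ref{Lemma: dimension} for this step and says nothing at all about the rank-one normal form, so the genuine added value of your write-up is twofold: you make the effectivity step explicit (bounding the pole order of the matrix entries of $\tau_{\hat M}$ and taking $N$ large enough that $(z-\zeta)^N\tau_{\hat M}(\hat\sigma^*\hat M)\subseteq\hat M$), and you actually prove the asserted classification of units of $\Rz[\tfrac{1}{z-\zeta}]$ as $c\,(z-\zeta)^d$ with $c\in\Rz^\times$, which is what the second sentence of the corollary needs. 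One small point to tighten: Weierstrass preparation in $\Rz$ applies only to series that are nonzero modulo $\mathfrak{m}_R$, so you should note that $a,b\not\equiv 0 \pmod{\mathfrak{m}_R}$ --- this follows from $ab=(z-\zeta)^{n+m}$, whose reduction is $z^{n+m}\neq 0$, together with primality of a uniformizer of $R$ in $\Rz$; alternatively your second formulation (that $\Rz$ is a two-dimensional regular local ring, hence a UFD in which $z-\zeta$ is prime because $\Rz/(z-\zeta)\cong R$) settles the unit classification with no case distinction and is the cleaner way to phrase it. With that caveat addressed, your argument is complete and in fact more detailed than the paper's.
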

    \begin{proof}
    	With Lemma~\ref{Lemma: dimension} every local shtuka $ \M \in \KatLshtuka{\Fqrz} $ is effective after tensoring it with $\mathbb{1}(n)$ for some $ n\in \Z $, i.e.   $ \M \otimes \mathbb{1}(n)=: \M' $ is effective. Thus $ \M\cong \M'\otimes \mathbb{1}(-n) $.
    \end{proof}
    A rich class of examples is given by the  category of \Am s in the following way:
    \begin{ex}\label{ex: associated shtuka}
    	Let $L$ be a finite extension of $Q$ as in the introduction, let $K$ be  a completion of $L$ at a place $w$ lying above $A$ and let $R$ be a valuation ring of $K$.  An \Am \ $\underline{\mathcal{M}}$ over $R$  is a pair $(\mathcal{M},\tau_{\mathcal{M}})$, consisting of a locally free $A_R:= A \otimes_{\Fq} R$-module $\mathcal{M}$ of finite rank and an isomorphism $   \tau_{\mathcal{M}}: \sigma^*\mathcal{M}\lvert_{\Spec A_R \backslash V(J)} \to \mathcal{M}\lvert_{\Spec A_R \backslash V(J)} $
    	of the associated sheaves outside the vanishing locus of the ideal $J:=(a\otimes1-1\otimes\gamma(a)|a\in A )= \ker(\gamma \otimes id_R:A_R\to R)$.\\
    	An \Am \ $\underline{M}$ over $K$ has  a good model over $R$, if there exists an \Am \ $\mathcal{\underline{M}}$ over $R$ satisfying $\mathcal{\underline{M} }\otimes_R K\cong\underline{M}$. Its reduction $\mathcal{\underline{M} } \otimes_R R/\mathfrak{m}_R$ is an \Am \ over $R/ \mathfrak{m}_R$.
    	Note that the \Am \ $\underline{M} $ over $L$ always has generic A-characteristic and the reduction has A-characteristic $v=\ker(A\to \mathfrak{m}_R)$.
    	We denote by $A_{v,R}$ the completion of $\MO{\MC_R}$ at the closed subscheme $v \times \Spec R$ and  by $ Q_{v,R}=A_{v,R}[\frac{1}{v}]$ its fraction field. We do not assume that $ v \times \Spec R$ consists of one point.
    	Thus, to give the associated shtuka to an \Am \ over $R$, we are required to take $f_v:= [\F{v}:\Fq] \geq 1$ into considerations. If $f_v=1$ then there is only one point over $v$ and the fields $\F{v} \cong \Fq$ are isomorphic, with $q_v=q$, $ \hat{\sigma}=\sigma^{q_v} $ and hence $ A_{v,R}=R\Laurrentreihe{z}$ holds.
    	Thus we associate with $\mathcal{\underline{M}}$ the local shtuka defined by $\mathrm{\underline{\hat{M}}}_v(\underline{\mathcal{M}}):=\mathcal{\underline{{M}}} \otimes_{A_R} R\Laurrentreihe{z}=(\mathcal{M} \otimes_A R\Laurrentreihe{z}, \tau_M \otimes \mathrm{id_{R\Laurrentreihe{z}}})$ over $R$.\\
    	Now let $f_v > 1$, then it gets more complicated as  $A_{v,R}$ is not an integral domain; compare  \cite[after Prop. 4.8]{Bornhofen.Hartl.2011}. Consider the extension  $\gamma:A_v\to R$  and define for $i\in \Z/f_v\Z$  the ideals $\mathfrak{a}_i:= (a\otimes 1-1\otimes \gamma(a)^{q^i}|a\in \F{v}) $ in $ A_{v,R} $.
    	As $\F{v}$ is a finite field extension of $\Fq$, it is given by some element $a\in\F{v}\setminus \F{q}$ with $\F{v}=\F{q}(a)$ and minimal polynomial $m_{a}$. Thus $m_a$ divides the polynomial $\prod_{i\in \Z/f_v\Z} (X-a^{q^i})$, in which each factor represents an  $\mathfrak{a}_i$ and  hence $\prod_{i\in \Z/f_v\Z}\mathfrak{a}_i=0$.
    	By the Chinese remainder theorem we have the decomposition
    	\begin{align*}
    	A_{v,R}= \prod_{\Gal({\F{v}/\Fq})} A_v \hat{\otimes}_{\F{v}} R = \prod_{i \in  \Z/{f_v}\Z} A_{v,R} / \mathfrak{a}_i
    	\end{align*}
    	and a similar decomposition of $Q_{v,R}$. The ideals $\mathfrak{a}_i$ corresponds to the places $w_i$ of $\MO{\MC,\F{v}}$ lying above $v$. Each factor of the decomposition is canonically isomorphic to $\Rz$ and is permuted by $\sigma $ as $ \sigma(\mathfrak{a}_i)= \mathfrak{a}_{i+1}$ holds.
    	In particular $\sigma^{f_v}$ fixes each factor. Under the above decomposition  $J$ splits up as $ J \cdot A_{v,R} / \mathfrak{a}_0 =(z-\zeta)$ and $ J \cdot A_{v,R} / \mathfrak{a}_i=(1) $ for $i\neq 0$. Thus we associate with $\mathcal{\underline{M}}$ the local shtuka defined  by
    	$\underline{\mathrm{\hat{M}}}_v(\mathcal{\underline{M}}):= \mathcal{\underline{M}}  \otimes_{A_R} A_{v,R} / \mathfrak{a}_0= (M \otimes_{A_R} A_{v,R}/ \mathfrak{a}_0,(\tau_M\otimes id )^{f_v})$
    	where $\tau_M^{f_v}:= \tau_M \circ \sigma^*\tau_M\circ \dots \circ {\sigma^*}^{f_v-1}\tau_M $.
    \end{ex}
    \begin{defi}\label{def: associated shtuka}
    	The \emph{local $\hat{\sigma}$-shtuka associated at $v$} to an  \Am \ $\underline{M}$ over a valuation ring $R$  is defined as
    	\begin{align*}
    	\underline{\mathrm{\hat{M}}}_v(\underline{M}):= \underline{M} \otimes_{A_R} A_{v,R}= (M \otimes_{A_R} A_{v,R},(\tau_M\otimes id_{A_{v,R}} )^{f_v})
    	\end{align*}
    	where $f_v=[\F{v}:\Fq]$ and $\tau_M^{f_v}:= \tau_M \circ \sigma^*\tau_M\circ \dots \circ {\sigma^*}^{f_v-1}\tau_M $.
    \end{defi}
    The local shtuka associated at $v$ to an \Am  \  in the above way preserves effectiveness and \etale ness, see \cite[Ex. 2.2]{Hartl.Kim.2015} and
    has rank $\mathrm{rk} \ \underline{\mathrm{\hat{M}}}_v(\underline{M})=\mathrm{rk}\ \underline{M}$ and  dimension $\mathrm{dim} \  \underline{\mathrm{\hat{M}}}_v(\underline{M})= \mathrm{dim}\ \underline{M}$.
    \begin{ex}\label{ex: Carlitz introduction}
    	Let $\MC =\mathbb{P}^1_{\Fq}$, $A= \Fqt$, $Q=\Fqrt$ and let $L=\Fq(\vartheta)$. Then $\gamma: \Fqt \to \Fq(\vartheta)$ is given by $t\mapsto \vartheta$ and induces an isomorphism $Q \cong L$. The Carlitz \Am \ is defined as the \Am
    	\begin{align*}
    	\underline{C}=(L[t], (t-\vartheta))
    	\end{align*} over $L$. Consider the ideal $v=(z(t))$ generated by a monic irreducible polynomial $z(t) \in A$ of arbitrary degree. The ideal $v$ is maximal  and set  $\F{v}:= A/(z)$. Let $A_v$ be the completion $\MO{\MC,v}$  at $v$ and hence there exists the canonical isomorphism $\F{v}\Laurrentreihe{z}\xlongrightarrow{\sim} A_v$.\\
    	Let $\zeta$ be the image of $z$ under $\gamma$  and consider the valuation ring $R=\F{v}\Laurrentreihe{\zeta}$. Denote again by $\vartheta$ the image of $t$. The Carlitz \Am \ $\underline{C}$ has a good model over $R$ with good reduction given by the \Am \ $ \underline{C}=(R[t], (t-\vartheta))$ over $R$. The  associated local shtuka  is given by
    	\begin{align}
    	\underline{\hat{C}}:= \mathrm{\M}_v(\underline{C})= (\Rz, z-\zeta)
    	\end{align} regardless the degree of $z(t)$, see \cite[Ex. 2.7]{Hartl.Kim.2015}.
    \end{ex}
    \begin{rem}
    	Note that the Carlitz shtuka defined above is exactly the Tate object in the category $\KatLshtuka{A_v,R}$ and  $\KatLshtuka{Q_v,R}$. Therefore, we will denote it by $\mathbb{1}(1):= \underline{\hat{C}}$. It has rank and dimension $1$. A tensor power is given by $\mathbb{1}(n):= \underline{\hat{C}}^{\otimes n}:= (\Rz, (z-\zeta)^n)$ and has the same rank as $\underline{\hat{C}}$ but dimension $n$.
    \end{rem}

\section{Tate modules and Galois representation}\label{sec: Tate modules and Galois representation}
We keep the notation from the last section and let  $\M$ be a local shtuka over   $R$. This implies that $\tau_{\hat{M}}$ is an isomorphism over the extension to $K\Laurrentreihe{z}$. So $\M\otimes_{\Rz} K\Laurrentreihe{z}$ is an \etale \ local shtuka and we have the following definition:
\begin{defi}\label{Def: Tate modules}
	The   \emph{(dual) Tate module}   of a local shtuka $\M $ over $R$ is defined as
	\begin{align*}
	\TateTFunc{\M}:=(\M\otimes_{\Rz} K^{sep}\Laurrentreihe{z})^{\tau}:=\{ m\in \M\otimes_{\Rz} K^{sep}\Laurrentreihe{z}|\tau_{\hat{M}}(\hat{\sigma}^*(m)=m)\}
	\end{align*}
	and the \emph{rational Tate module} is defined as
	\begin{align*}
	\TateVFunc{\M}:= \{ m\in \M\otimes_{\Rz} K^{sep}\DoppelRund{z}|\tau_{\hat{M}}(\hat{\sigma}^*(m)=m)\}=\TateTFunc{\M}\otimes_{A_v}Q_v
	\end{align*}
\end{defi}
The name dual comes from the following reason. If ${\M}= {\M}(\mathcal{\underline{M}})$ comes from an \Am \ $\mathcal{\underline{M}}$ as in example~\ref{ex: associated shtuka} and
if $\underline{\mathcal{M}}=\mathcal{\underline{M}}(\varphi)$ is the \Am \ associated to a Drinfeld-module $\varphi$, then $\TateTFunc{\M}$  is dual to the Tate-module $T_v\varphi:=\varprojlim \varphi[v^n]$ of $\varphi$, see~\cite[Prop 4.8]{Hartl.Kim.2015}.\\
They are also called the \emph{ the $v$-adic realization of $\M$} and thus are denoted by  $\Heins{v}{\M,A_v}:=\TateTFunc{\M}$.\\
The Tate-module of the associated local shtuka to an \Am \  and the Tate-module of the \Am \ are isomorphic, see \cite[Prop. 4.6]{Hartl.Kim.2015}. However the former has a continuous $\AbsGalois{K}$- and the latter a continuous $\AbsGalois{L}$-action.
It is also shown in \cite[Prop. 4.2]{Hartl.Kim.2015} that $\TateTFunc{\M}$ is a free $A_v$-module of rank equal to $\rk\M$ and that $\TateVFunc{\M}$ is a $Q_v$-vector space of dimension also equal to $\rk\M$. Then Hartl and Kim shows that $\M \otimes_{\Rz} K\Laurrentreihe{z}$ can be recovered by the Galois invariants $(\TateTFunc{\M}\otimes_{A_v} K^{sep}\Laurrentreihe{z})^{\AbsGalois{K}}$ via the canonical isomorphism
\begin{align}\label{eq:HK isom}
\TateTFunc{\M}\otimes_{A_v} K^{sep}\Laurrentreihe{z} \xlongrightarrow{\sim} \M\otimes_{\Rz} K^{sep}\Laurrentreihe{z}.
\end{align}
To give an $A_v$-basis for the (rational) Tate module of $ \M:=(\hat{M},\tau_{\hat{M}}) $ over  $  R $, set $ r=\mathrm{rk}\ \M $. We choose a $\Rz$-basis of $\M$ and denote by $T$ the structure homomorphism $\tau_{\hat{M}}$ with respect to this basis.\\
Note that $ T\in \mathrm{GL}_r(\Rz\left[ \frac{1}{z-\zeta}\right]) $. Then a $ \F{v}\Laurrentreihe{z} $-basis $U \in GL_r(K^{sep}\Laurrentreihe{z})$ for the Tate module has to satisfy by definition $T\hat{\sigma}(U)=U$.
In the proof of \cite[Proposition 4.2 \& Remark 4.3]{Hartl.Kim.2015} the authors shows that $U$ is already in $ K\Erzeugt{\frac{z}{\zeta^{q}}}$. Then by \cite[Lemma 5.22]{Hartl.Kim.2015} one can solve for an effective local shtuka the equation $\hat{\sigma}(X)= X \cdot T $  for an $X \in \MO{K^{sep}}\Laurrentreihe{z}^{r\times r}$ and hence  $U$ is the inverse of the solution.
\begin{ex}\label{ex: Carlitz tate module}
	We continue with the Carlitz shtuka $\underline{\hat{C}}=(\Rz, (z-\zeta))$ from example~\ref{ex: Carlitz introduction}.
  The rank of $\underline{\hat{C}}$ is $ r=1 $, so after choosing a $\Rz$-basis for $\underline{\hat{C}}$,  we get $ T=\tau_{C,\Rz} =(z-\zeta)$.
  Let $K=\mathrm{Frac}(R)$.
	Then an inverse element  $ x=\sum_{i=0}^{\infty} x_i z^i \in GL_1(K^{sep}\Laurrentreihe{z}) $ of the Tate-module basis has to satisfy
	\begin{align}\label{eq: Carlitz-Tate equation}
	x_0^{q-1}&=-\zeta \nonumber \\
	x_i^{q} +\zeta \ x_i&=x_{i-1} \text{ for } i > 0
	\end{align}
	In what follows these equations are considered as \emph{Carlitz-Tate equations}. Let $ l_i  \in K^{sep}$ be the solutions of the  Carlitz-Tate equations for $i\geq 0$ and set $ l_+:=\sum_{i=0}^{\infty} l_i z^i \in \MO{K^{sep}}\Laurrentreihe{z}$. The element satisfies $  \sigma(l_+)=(z-\zeta)l_+$.
	Then a  basis $ u $ for $ \TateTFunc{\underline{\hat{C}}} $ is defined by $ u=l_+^{-1} $ which implies $ \TateTFunc{\Carlitz}=l_+^{-1}A_v $ and $\TateVFunc{\underline{C}}= l_+^{-1}Q_v$.
	The basis depends on the choice of $ l_i $ and a different choice leads to a different power series which is a multiple of $ l_+ $ and  an unit of $  (K^{sep}\Laurrentreihe{z})^{\sigma=id}=A_v^\times$, cf. \cite[Ex. 4.10]{Hartl.Kim.2015}.
	The canonical isomorphism~\eqref{eq:HK isom} is given by sending the generator $l_+^{-1}$ of $\TateTFunc{\M}$ to the same element in $(K^{sep})^\times$.
\end{ex}
We  define the covariant functors induced by the Tate modules as
\begin{align*}
\underline{\TateTFunc{}}:\  &\KatLshtuka{A_v,R}\to \KatRep{A_v}{\AbsGalois{K}}{cont}, &\M \mapsto (\rho:\AbsGalois{K}\to GL(\TateTFunc{\M})),\\
\underline{\TateVFunc{}}: \ &\KatLshtuka{Q_v,R}\to \KatRep{Q_v}{\AbsGalois{K}}{cont}, &\M \mapsto (\rho:\AbsGalois{K}\to GL(\TateVFunc{\M})).
\end{align*}
The category  $ \KatRep{Q_v}{\AbsGalois{K}}{cont} $ of continuous $ \AbsGalois{K} $-representation is a neutral Tannakian category with fiber functor $ \omega_{rep}:\KatRep{Q_v}{\AbsGalois{K}}{cont} \to \KatVec{Q_v}$ given by $(\rho:\AbsGalois{K}\to GL(V))\mapsto V $.
Let $\M$ be a local shtuka, then we can associate a Tannakian subcategory $\DoppelSpitz{\underline{\TateVFunc{\M}}}$ of  $\KatRep{Q_v}{\AbsGalois{K}}{cont} $.
{Note that it makes no difference in which shtuka category $\M$ lives, as the natural transformation from $\underline{\TateTFunc{}}$ and $\underline{\TateVFunc{}}$ is by definition compatible with the tensor structure.}
By Tannakian duality there exists a linear affine algebraic group $ \HalgGp{\M}:= \TensorAut{\omega_{rep}}{\DoppelSpitz{\underline{\TateVFunc{\M}}}}$ over $ Q_v $ such that the category $\DoppelSpitz{\underline{\TateVFunc{\M}}}$ is tensor equivalent to the category of $Q_v$-representations of $\HalgGp{\M}$. We call the group $\HalgGp{\M}$  \emph{monodromy group of $\M$}. The monodromy group of $\M$ is a subgroup in $ \mathrm{GL}(\TateVFunc{\M}) $ or more to the point in the centralizer of $\QEnd{\M}$ in $ GL(\TateVFunc{\M}) $.\\
For all $ g \in \AbsGalois{K} $, $ \rho(g)$ is already an automorphism of $ {\TateVFunc{\M}} $ over $Q_v$. Moreover they are already automorphism on  the subcategory generated by $\TateVFunc{\M}$. Then $ \rho(g) $ is an element of $ \TensorAut{\omega_{rep}}{\DoppelSpitz{\underline{\TateVFunc{\M}}}} $ and so in $ \HalgGp{\M} $.
\begin{defi}
	We define the \emph{Galois representation of a local shtuka $\M$ of rank $r$} as the representation
	\begin{align*}
	\GR{\M}:=\GR{\M,v}:\AbsGalois{K} &\to \HalgGp{\M} (Q_v).
	\end{align*}
\end{defi}
If we choose a basis for a local shtuka $\M$ and a basis $U$ for its Tate-module $\TateTFunc{\M}$, then it also follows by \cite[Remark 4.3]{Hartl.Kim.2015} that $U \in \mathrm{GL}_r(K{sep}\Laurrentreihe{z})$.
Thus $U$ is a basis transformation matrix for $\HalgGp{\M}(Q_v)$ with $U^{-1}g(U)\in \mathrm{GL}_r(A_v)$ for a $g\in \AbsGalois{K}$. Then  $\GR{\M}$ is given by $g\mapsto U^{-1}g(U)$ for a $g\in \AbsGalois{K}$.\\
By \cite[Lem. 4.2]{Hartl.Pal.2018} the algebraic group $ \HalgGp{\M} $ is the Zariski closure of $ \GR{\M}(\AbsGalois{K})$ in $ \mathrm{GL}_{Q_v}(\TateVFunc{\M}) $.
\begin{ex}\label{Ex: Carlitz representation}
	Continue with example~\ref{ex: Carlitz tate module} of the Carlitz shtuka $\underline{\hat{C}}$.
	We have chosen an uniformizer $z$ and hence have $A_v\cong\F{v}\Laurrentreihe{z}$ and $K=\F{v}\DoppelRund{\zeta}$ as $R$ contains $\F{v}\Laurrentreihe{\zeta}$. Then set $ K_n:=\F{v}\DoppelRund{\zeta}(l_0, \dots, l_{n-1}) $ and $ K_\infty:= \F{v}\DoppelRund{\zeta}(l_i:i\in \mathbb{N}_0) $ for the field extension of $K$ by the solutions of the Carlitz-Tate equations.
	This field tower is  the function field analogous of the cyclotomic tower $\Q(\sqrt[p^i]{1}| i \in \N_0)$ and we call it the Carlitz-Tate towers. Then  we get the so called $v$-adic cyclotomic character
	\begin{align}\label{eq: Carlitz rep isom}
	\GR{\Carlitz}: \Gal(K_\infty/K) &\xlongrightarrow{\sim}\F{v}\Laurrentreihe{z}^\times=\GM(\F{v}\Laurrentreihe{z})=\Aut{\F{v}\Laurrentreihe{z}}{\TateTFunc{\M}}\\
	g &\longmapsto l_+g((l_+)^{-1})= \GR{\C}(g), \nonumber
	\end{align}
	which satisfies $ g(l_+)^{-1}= \GR{\Carlitz}(g)\cdot l_+^{-1}$ for all $g \in  \Gal(K_\infty/K)$.  Note that, as $ \Carlitz $ has rank $ 1 $, $\GM{}$ is the only canonical choice for $ \HalgGp{\Carlitz} $.\\
	The reason for the isomorphism~\eqref{eq: Carlitz rep isom} is that $K_n$ is totally ramified over $K$ of degree $(q-1)q^n$, see~\cite[Example 4.10]{Hartl.Kim.2015}.
	In particular, if $K^{un}=\overline{\F{v}}\DoppelRund{\zeta}$ is the completion of the maximal unramified field extension, then still $K_n\cdot K^{un}$ is totally ramified over $K^{un}$ of degree  $(q-1)q^n$. So the isomorphism~\eqref{eq: Carlitz rep isom} also holds for $K^{un}$ instead of $K$.
\end{ex}

\section{Tannakian theory on $\KatLshtuka{}$}
The categories $\KatLshtuka{A_v,R}$ and $\KatLshtuka{Q_v,R}$ are  additive, have internal homs,  a tensor structure and  are rigid, which all descents from their $ \Rz $-module structure. To be more precise, let $ \M=(\hat{M},\tau_{\hat{M}}) $ and $ \N=(\hat{N},\tau_{\hat{N}}) $ be two local shtukas over $ R $. The internal  Homs $ \operatorname{\underline{Hom}}(\M,\N) $ are defined for the modules $ \hat{M} $ and $ \hat{N} $ as the usual  $ \Hom{\hat{M}}{\hat{N}}{\Rz} $ and the shtuka structure is given by  $ h\mapsto \tau_{\hat{N}} \circ h \circ \tau_{\hat{M}}^{-1} $ for a $ h \in \sigma^*\operatorname{\underline{Hom}}(\M,\N) $.
In the same way is the tensor product given by the tensor product of the modules $\hat{M}\otimes_{\Rz} \hat{N}$ and the shtuka structure by the tensor product $\tau_{\hat{M}} \otimes \tau_{\hat{N}}$. The unit element associated to the tensor structure is the shtuka $ \mathbb{1}(0):= (\Rz, id_{\Rz}) $.
We  define a tensor power of an local shtuka $ \M $ by $ \M^{\otimes 0}:= \mathbb{1}(0) $ and $ \M^{\otimes n}:= \M^{\otimes n-1}\otimes \M $ for $ n \in \mathbb{N} $.
The dual  $ \M^\vee:=\operatorname{\underline{Hom}}(\M,\mathbb{1}(0)) $ consists of the module $ \hat{M}^\vee:= \Hom{\hat{M}}{\Rz}{\Rz}  $  and the isomorphism $ (\tau_{\hat{M}})^{-1} $.
\begin{prop}\label{Prop: Sht is Rigid additive}
	The categories $ \KatLshtuka{A_v,R} $ and  $ \KatLshtuka{Q_v,R} $ are rigid additive tensor category over $A_v$ and respectively  $Q_v$. Furthermore  $ \KatLshtuka{Q_v,R} $ is abelian.
\end{prop}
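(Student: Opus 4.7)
The plan is to verify each categorical property in turn, leveraging the fact that local shtukas are modules over $\Rz$ (resp.\ $R\DoppelRund{z}$) equipped with the additional $\tau$-datum, so most of the structure descends from the underlying module theory with $\tau$-compatibility imposed as a linear constraint.

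First I would establish the additive structure on both $\KatLshtuka{A_v,R}$ and $\KatLshtuka{Q_v,R}$. The zero object is $(0,0)$, and the direct sum $\M \oplus \N$ uses $\hat M \oplus \hat N$ with $\tau_{\hat M} \oplus \tau_{\hat N}$. The Hom sets are abelian subgroups of the module Homs, cut out by the linear $\tau$-compatibility condition $\tau_{\hat N} \circ \hat{\sigma}^*f = f \circ \tau_{\hat M}$. To upgrade to $A_v$-linearity I would use that every $a \in A_v = \F{v}\Laurrentreihe{z}$ is $\hat{\sigma}$-invariant: $\hat{\sigma}$ fixes $z$ and acts on $\F{v} \subseteq R$ as the $q_v$-th power Frobenius, which is the identity on $\F{v}$. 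Hence scalar multiplication by $a$ commutes with $\hat{\sigma}^*$ and preserves $\tau$-compatibility. The same argument with $Q_v$ in place of $A_v$ handles $\KatLshtuka{Q_v,R}$.

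Next I would dispose of the tensor and rigid structure. The tensor product, unit $\mathbb{1}(0)$, internal Hom and dual $\M^\vee$ have been defined on underlying modules with the induced $\tau$-structures just before the proposition. The associator, unitor, braiding and the evaluation/coevaluation maps $\M^\vee \otimes \M \to \mathbb{1}(0)$ and $\mathbb{1}(0) \to \M \otimes \M^\vee$ are the standard ones for finite free $\Rz$-modules. A short calculation using $\tau_{\M \otimes \N} = \tau_{\hat M} \otimes \tau_{\hat N}$ and $\tau_{\M^\vee} = (\tau_{\hat M}^{-1})^\vee$ shows that these module-theoretic maps commute with $\tau$, hence define morphisms of local shtukas; the pentagon, triangle and rigidity identities then hold because they already hold at the module level.

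Finally, the abelianness of $\KatLshtuka{Q_v,R}$ is where I expect the main obstacle. Given a quasi-morphism $f\colon \M \to \N$, its underlying $R\DoppelRund{z}$-linear map $\hat M[\tfrac{1}{z}] \to \hat N[\tfrac{1}{z}]$ has a kernel and cokernel as $R\DoppelRund{z}$-modules, and the $\tau$-compatibility of $f$ forces $\tau_{\hat M}$ and $\tau_{\hat N}$ to restrict, respectively descend, to isomorphisms on these after inverting $z-\zeta$. The delicate point is to promote them to genuine local shtukas, i.e.\ to exhibit the kernel and cokernel as free $\Rz$-modules of the appropriate rank together with a suitable $\tau$. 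I would do this by choosing $\tau$-stable $\Rz$-lattices inside each, using completeness of $R$ and the standard isogeny techniques available for local shtukas (the same circle of ideas as in \cite[Cor.\ 4.5]{Hartl.Kim.2015}). Once kernel and cokernel objects are exhibited, verifying that each monomorphism is the kernel of its cokernel, and dually, reduces to the corresponding statements for $R\DoppelRund{z}$-modules, which are formal.
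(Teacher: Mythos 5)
Your handling of the additive, $A_v$- resp.\ $Q_v$-linear, tensor and rigid structure is correct and is essentially what the paper does (it notes $\EndCat{\KatLshtuka{A_v,R}}{\mathbb{1}(0)}=A_v$, $\EndCat{\KatLshtuka{Q_v,R}}{\mathbb{1}(0)}=Q_v$ and appeals to Deligne); the $\hat{\sigma}$-invariance of $A_v$ is indeed the reason the Hom-sets are $A_v$-modules. The genuine gap is in the abelianness of $\KatLshtuka{Q_v,R}$, at exactly the step you declare formal. There is no category of $R\DoppelRund{z}$-modules to which the axioms can be outsourced: $R\DoppelRund{z}$ is a one-dimensional domain, not a field, so in the abelian category of all $R\DoppelRund{z}$-modules the kernel and cokernel of a map of free modules are typically not free and hence not the underlying module of any local shtuka, while the category of finite free $R\DoppelRund{z}$-modules (where your lattice-choosing lands) is \emph{not} abelian --- multiplication by a uniformizer $\pi$ of $R$ is a monomorphism there with vanishing cokernel, yet it is not the kernel of its cokernel. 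The only reason such maps cause no harm in $\KatLshtuka{Q_v,R}$ is that they are not $\tau$-compatible, so the verification cannot be ``formal at the $R\DoppelRund{z}$-level''; it must use $\tau$.

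The non-formal heart, which your proposal never addresses, is to show that for a $\tau$-compatible map the only coimage--image defect (equivalently, the torsion of the saturation) is one that becomes invertible in the isogeny category. This is precisely how the paper argues: it constructs kernel, image and cokernel integrally with their inherited shtuka structures and then observes that the canonical map $\Coim f\to \mathrm{Im}\,f$ is an \emph{isogeny}, i.e.\ an isomorphism after inverting $z$; since $\QHom{\M}{\N}{R}=\Hom{\M}{\N}{R}\otimes_{A_v}Q_v$, quasi-isogenies are invertible in $\KatLshtuka{Q_v,R}$ and abelianness follows. Torsion supported at $\pi$ or at $(z-\zeta)$ would \emph{not} become invisible, because only elements of $Q_v^\times$ and quasi-isogenies are formally invertible, and ruling it out uses both $\tau$ and the hypothesis that $R$ is a complete \emph{discretely} valued ring: over a complete non-discretely valued extension containing all Carlitz--Tate solutions, the element $l_+$ with $\hat{\sigma}(l_+)=(z-\zeta)l_+$ lies in $\Rz$ and defines a map $\mathbb{1}(1)\to\mathbb{1}(0)$ that is both a monomorphism and an epimorphism in the isogeny category but not an isomorphism, so the claimed reduction genuinely breaks without discreteness. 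A secondary point: your kernel lattice $\ker f\cap \hat{M}$ is free because it is reflexive over the two-dimensional regular local ring $\Rz$, but for the cokernel the quotient of $\hat{N}$ by the saturated image is only torsion-free, and torsion-free does not imply free over $\Rz$; ``completeness of $R$ and standard isogeny techniques'' do not supply the required free $\tau$-stable lattice without further argument.
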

\begin{proof}
	By additivity and \cite[1.15]{Deligne.Tannakian.1989} is $\KatLshtuka{A_v,R}$  a $ A_v $-linear and  $\KatLshtuka{Q_v,R}$ a  $ Q_v$-linear categories.
  The endomorphism ring  of $\KatLshtuka{A_v,R}$ is given by $ \EndCat{\KatLshtuka{A_v,R}}{\mathbb{1}(0)}= A_v$ and of $\KatLshtuka{Q_v,R}$  by  $ \EndCat{\KatLshtuka{Q_v,R}}{\mathbb{1}(0)}= Q_v$.\\
	Let $ \M,\N \in \KatLshtuka{A_v,R} $ and $ f:\M \to \N $. Consider the kernel and image of $ f $ in the category of free modules $ \KatMod{\Rz}^f $.
  As  submodules of $ \M $ and $ \N $ they  have a shtuka-structure. The cokernel in $ \KatMod{\Rz}^f $ is given by the usual cokernel modulo torsion. It also inherits the shtuka structure of $\N  $ as submodule.\\
	However the natural  morphism $ \operatorname{Coim} f \to \mathrm{Im} f$ is only an isogeny. Thus only an isomorphism in $ \KatLshtuka{Q_v,R} $, which was to show.
\end{proof}
To relate the abelian theory of $\KatLshtuka{Q_v,R}$ to  $\KatLshtuka{A_v,R}$, we introduce so called \emph{abelian and Tannakian lattices}; compare \cite{Wedhorn.2004} and \cite{Duong.Hai.2013}.
\begin{defi}\label{Def: Wedhorn Extension}
	Let $ t:S\to S' $ be a ring homomorphism  of Dedekind rings and let $ C $ be an $S$-linear category. We define the \emph{local scalar extension} $ C $ along $ t $  as the category $ C_{S'} $ with the same  object class as $ C $ and $ hom$-set of two objects $ X,Y \in C_{S'} $ given by
	\begin{align*}
	\Hom{X}{Y}{C_{S'}}:= \Hom{X}{Y}{C}\otimes_R S'
	\end{align*} Thus $ C_{S'} $ is an $ S' $-linear category with a $S  $-linear functor $ C\to C_{S'} $.\\
	We define the \emph{local fraction extension of $C$}  as the extension $ C_{\mathrm{Frac}(S)}$.
\end{defi}
Obviously the extension the functor $ C\to C_{S'} $ is essential surjective. If $ t $ is flat, it preserves epi- and monomorphism and the converse holds if $ t$ is faithfully flat, see~\cite[\S 3.6]{Wedhorn.2004}. If $ C $ is a monoidal category, then $ C_S $ also, see~\cite[\S 3.8]{Wedhorn.2004}. A $ S $-linear  functor $ s:C\to D $ induces a $ S' $-linear functor $ s_{S'}: C_{S'}\to D_{S'} $.
\begin{defi}
	An \emph{abelian lattice $ \Lambda$ over a Dedekind ring $ S $} is a  $ S $-linear additive symmetric monoidal category  $\Lambda$ with an isomorphism $ S \cong \End{}{\mathbb{1}_\Lambda} $ and in which kernels and images exist, such that the local fraction extension $ \Lambda_{\mathrm{Frac}(S)} $ is a rigid abelian monoidal category over $  \mathrm{Frac}(S) $.
\end{defi}
The existence of images is in the sense that for each morphism $f:X\to Y$ in $C$ an image factorization exists. That is the most general definition of an image and follows in the case of local shtukas from the existence of the cokernel.
\begin{Corollary}
	$ \KatLshtuka{A_v,R} $  is  a rigid  abelian  tensor lattice over $A_v$ with local fraction extension $ (\KatLshtuka{A_v,R})_{Q_v}=\KatLshtuka{Q_v,R} $.
\end{Corollary}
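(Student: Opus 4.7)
The plan is to reduce the corollary to three easy verifications of the axioms in the definition of an abelian lattice and then to match the local fraction extension with $\KatLshtuka{Q_v,R}$, using Proposition~\ref{Prop: Sht is Rigid additive} as the main engine.

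First, I would note that $A_v \cong \F{v}\Laurrentreihe{z}$ is a complete discrete valuation ring, in particular a Dedekind ring, so the framework of Definition~\ref{Def: Wedhorn Extension} applies. The $A_v$-linear, additive, symmetric monoidal structure of $\KatLshtuka{A_v,R}$ together with the identity $A_v = \EndCat{\KatLshtuka{A_v,R}}{\mathbb{1}(0)}$ is precisely the content of Proposition~\ref{Prop: Sht is Rigid additive}, so nothing further is required on that front.

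Second, I would verify that kernels and images exist in $\KatLshtuka{A_v,R}$. For a shtuka morphism $f:\M\to\N$, the underlying $\Rz$-module kernel and image are $\tau$-stable submodules of $\hat M$ and $\hat N$, so they inherit shtuka structures and provide an image factorization inside $\KatLshtuka{A_v,R}$. This is exactly the argument already carried out in the proof of Proposition~\ref{Prop: Sht is Rigid additive}; the same proof also highlights that the canonical map $\Coim f \to \mathrm{Im}\, f$ is only an isogeny, which is why $\KatLshtuka{A_v,R}$ itself fails to be abelian and why the lattice formalism is needed.

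Third, I would identify $(\KatLshtuka{A_v,R})_{Q_v}$ with $\KatLshtuka{Q_v,R}$. Both categories have the same class of objects, so only the Hom-sets need comparison. By Definition~\ref{Def: Wedhorn Extension},
\[
\Hom{\M}{\N}{(\KatLshtuka{A_v,R})_{Q_v}} \;=\; \Hom{\M}{\N}{R}\otimes_{A_v} Q_v,
\]
which coincides with $\QHom{\M}{\N}{R}$ by the base-change identity recorded immediately after Definition~\ref{def: shtukas}. The tensor product, duals and unit object are defined directly on the underlying $\Rz$-modules and are therefore preserved by the extension. Combined with the assertion of Proposition~\ref{Prop: Sht is Rigid additive} that $\KatLshtuka{Q_v,R}$ is rigid abelian over $Q_v$, this completes the verification. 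I do not anticipate a serious obstacle: the corollary is essentially a bookkeeping statement that repackages Proposition~\ref{Prop: Sht is Rigid additive} in the language of abelian lattices, with the identity $\QHom = \Hom \otimes_{A_v} Q_v$ doing the only non-formal work; the one point worth double-checking is that the isomorphism of Hom-sets is compatible with composition and with the tensor structure, which is routine because both categories inherit these operations from the same underlying module categories.
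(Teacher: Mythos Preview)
Your proposal is correct and follows essentially the same approach as the paper: the paper's proof is the single line ``This follows by definition~\ref{def: shtukas} and proposition~\ref{Prop: Sht is Rigid additive},'' and your argument is simply a careful unpacking of what that sentence means, invoking Proposition~\ref{Prop: Sht is Rigid additive} for the additive rigid tensor structure, kernels, images, and abelianness of the isogeny category, and using the identity $\QHom{\M}{\N}{R}=\Hom{\M}{\N}{R}\otimes_{A_v}Q_v$ recorded after Definition~\ref{def: shtukas} to match the local fraction extension with $\KatLshtuka{Q_v,R}$.
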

\begin{proof}
	This follows by definition~\ref{def: shtukas} and proposition~\ref{Prop: Sht is Rigid additive}.
\end{proof}
\begin{defi}\cite[2.2.2]{Duong.Hai.2013}
	Let $S$ be a Dedekind ring.	A \emph{neutral Tannakian lattice $ \Lambda $ over $  S$}  is a rigid abelian lattice $\Lambda$ equipped with a $ S$-linear additive tensor functor $ \omega: \Lambda \to \KatMod{S} $, such that $ \omega$ is faithful, preserves kernels and images and is fully faithful restricted to the monoidal subcategory generated by the unit element.
\end{defi}
\begin{prop}
	The category $ \KatLshtuka{Q_v,R} $ is a neutral Tannakian category with fiber functor $  \omega_{Sht_Q}:= \TateVFunc{}: \KatLshtuka{Q_v,R}\to \KatVec{Q_v}$ and the categroy $ \KatLshtuka{A_v,R} $ is a neutral Tannakian lattice with fiber functor $ \omega_{Sht_A}:= \TateTFunc{}: \KatLshtuka{A_v,R} \to \KatMod{A_v} $.
\end{prop}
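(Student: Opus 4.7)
The plan is to reduce everything to the canonical comparison isomorphism
\begin{equation*}
\TateTFunc{\M}\otimes_{A_v} K^{sep}\Laurrentreihe{z}\;\xlongrightarrow{\sim}\;\hat{M}\otimes_{\Rz} K^{sep}\Laurrentreihe{z}
\end{equation*}
of~\eqref{eq:HK isom}. By Proposition~\ref{Prop: Sht is Rigid additive} and the corollary following it, $\KatLshtuka{A_v,R}$ is already a rigid abelian lattice over $A_v$ whose local fraction extension is the rigid abelian tensor category $\KatLshtuka{Q_v,R}$, with endomorphism rings of the unit objects equal to $A_v$ and $Q_v$ respectively. Hence the remaining tasks are to show that $\omega_{Sht_A}$ and $\omega_{Sht_Q}$ are faithful additive tensor functors, that $\omega_{Sht_A}$ preserves kernels and images and is fully faithful on $\langle\mathbb{1}(0)\rangle$, and that $\omega_{Sht_Q}$ is exact on $\KatLshtuka{Q_v,R}$. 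Since $\omega_{Sht_Q}=\omega_{Sht_A}\otimes_{A_v}Q_v$ and $Q_v$ is flat over $A_v$, every property needed for $\omega_{Sht_Q}$ will follow from the corresponding one for $\omega_{Sht_A}$, so I would focus on the latter.

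First I would check the tensor structure. The identity $\TateTFunc{\mathbb{1}(0)}=A_v$ follows from $(K^{sep}\Laurrentreihe{z})^{\hat{\sigma}=\mathrm{id}}=\F{q_v}\Laurrentreihe{z}=A_v$, since $\hat{\sigma}$ fixes $z$ and acts as the $q_v$-Frobenius on $K^{sep}$. For the tensor product, applying the comparison isomorphism to $\M\otimes\N$ and separating the tensor over $K^{sep}\Laurrentreihe{z}$ produces a natural identification
\begin{equation*}
\TateTFunc{\M\otimes\N}\otimes_{A_v} K^{sep}\Laurrentreihe{z}\;\cong\;\bigl(\TateTFunc{\M}\otimes_{A_v}\TateTFunc{\N}\bigr)\otimes_{A_v} K^{sep}\Laurrentreihe{z};
\end{equation*}
both sides are free $A_v$-modules of rank $\rk\M\cdot\rk\N$ by \cite[Prop.~4.2]{Hartl.Kim.2015}, so the isomorphism descends from $K^{sep}\Laurrentreihe{z}$ to $A_v$. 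Functoriality and $A_v$-linearity on Hom-sets are immediate from the definition of $\TateTFunc{}$.

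Faithfulness follows at once from the comparison isomorphism: a morphism $f:\M\to\N$ with $\TateTFunc{f}=0$ satisfies $f\otimes\mathrm{id}_{K^{sep}\Laurrentreihe{z}}=0$, and since $\Rz\hookrightarrow K^{sep}\Laurrentreihe{z}$ is injective and $\hat{M}$ is free over $\Rz$, already $f=0$. For preservation of kernels and images, I would use the construction in the proof of Proposition~\ref{Prop: Sht is Rigid additive}: kernels and images of shtuka morphisms are built from the underlying $\Rz$-modules, so base change along $\Rz\to K^{sep}\Laurrentreihe{z}$ reduces the question to $\tau$-modules over $K^{sep}\Laurrentreihe{z}$. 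There the shtuka is étale, and invariants of étale shtukas over $K^{sep}\Laurrentreihe{z}$ form an exact functor by the same solvability of $\hat{\sigma}(X)=X\cdot T$ in $\MO{K^{sep}}\Laurrentreihe{z}^{r\times r}$ invoked in Example~\ref{ex: Carlitz tate module}. Tensoring with $Q_v$ then gives exactness of $\omega_{Sht_Q}$. Finally, $\langle\mathbb{1}(0)\rangle$ consists up to isomorphism of finite direct sums of $\mathbb{1}(0)$, on which $\omega_{Sht_A}$ returns free $A_v$-modules with Hom-sets matching $M_{n,m}(A_v)$ on both sides, so fully faithfulness there is automatic.

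The main obstacle will be the right-exactness step: lifting $\tau$-invariants along a surjection of étale shtukas over $K^{sep}\Laurrentreihe{z}$ requires solving an equation of the form $\hat{\sigma}(u)-u=c$ inside the kernel, which ultimately rests on the surjectivity of a Lang-type map for $\GL_r$ over $\MO{K^{sep}}\Laurrentreihe{z}$. Once this input is granted, or imported from~\cite[Rem.~4.3]{Hartl.Kim.2015}, the remaining properties are formal consequences of the comparison isomorphism~\eqref{eq:HK isom} and the $A_v$-linearity of $\TateTFunc{}$.
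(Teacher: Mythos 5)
Your argument is essentially correct, but it takes a genuinely different route from the paper. The paper's own proof is a one-liner: it invokes \cite[Theorem 4.20]{Hartl.Kim.2015}, which asserts that $\TateTFunc{}$ and $\TateVFunc{}$ are fully faithful, and concludes that they are fiber functors in the required sense (the lattice structure and the identifications $\End{}{\mathbb{1}(0)}=A_v$, resp.\ $Q_v$, having already been established in Proposition~\ref{Prop: Sht is Rigid additive} and its corollary). You instead unwind the definitions and verify the axioms directly from the comparison isomorphism~\eqref{eq:HK isom}: the computation $\TateTFunc{\mathbb{1}(0)}=A_v$, the tensor compatibility by faithfully flat descent from $K^{sep}\Laurrentreihe{z}$ to $A_v$, faithfulness, preservation of kernels and images via exactness of $\tau$-invariants for \'etale shtukas over $K^{sep}\Laurrentreihe{z}$, and full faithfulness on $\DoppelSpitz{\mathbb{1}(0)}$. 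What your approach buys is transparency: it makes explicit where exactness actually comes from (the Lang-type surjectivity of $X\mapsto \hat{\sigma}(X)X^{-1}$ over $\MO{K^{sep}}\Laurrentreihe{z}$), a point the paper's citation of full faithfulness alone does not literally address, since preservation of kernels and images is an extra condition in the Duong--Hai definition. What it costs is that the two non-formal steps -- the descent of the tensor isomorphism to $A_v$ and the surjectivity-on-invariants step you flag as the main obstacle -- are precisely the content of \cite[Prop.~4.2, Rem.~4.3, Lem.~5.22, Thm.~4.20]{Hartl.Kim.2015}, so in the end you rely on the same source as the paper, only at a lower level of its results; also, your claim that $\DoppelSpitz{\mathbb{1}(0)}$ consists only of finite direct sums of $\mathbb{1}(0)$ deserves a word of justification in the lattice setting, though full faithfulness there indeed reduces to $\End{}{\mathbb{1}(0)}=A_v$. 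Neither issue is a genuine gap, but if you want a self-contained proof you should either carry out the Lang-torsor argument or cite it as the paper does.
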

\begin{proof}
	With \cite[Theorem 4.20]{Hartl.Kim.2015}  both functors are fully faithful and hence fiber functors in the sense described above.
\end{proof}
By Tannakian duality one can associate to  $\M \in \KatLshtuka{Q_v,R}  $  an affine linear algebraic group $ \MG{\M}:=\TensorAut{\omega_{Sht_Q}}{\DoppelSpitz{\M}} \subseteq \GL(\TateVFunc{\M}) $.
\begin{defi}
	$ \MG{\M}  $ is called the \emph{Motivic Galois group} of $ \M $.
\end{defi}
$\KatLshtuka{Q_v,R}$ is a Tannakian category and $\underline{\TateVFunc{}}$ is an exact tensor functor over $\KatRep{Q_v}{\AbsGalois{K}}{cont}$. So we have that the monodromy group $\HalgGp{\M}$ of a local shtuka $\M$ is a subgroup of the motivic Galois group $\MG{\M}$.\\
In \cite{Duong.Hai.2013} Nguyen Dai Duong and Ph\`{u}ng H\^{o} Hai established a similar theory for Tannakian lattices.  Then main theorem  is as follows
\begin{Theorem} \cite[2.3.2]{Duong.Hai.2013}
	Let $ (\Lambda,\omega) $ be a Tannakian lattice over a Dedekind ring $ S $. Then the groups scheme $ {\iMG{\M}}{}:= \TensorAut{\omega}{\Lambda} $ is faithfully flat over $ S $ and $ \omega $ induces a equivalence between $ \Lambda $ and the category of finite projective representations $ \KatRep{S}{{\iMG{\M}}{}}{fp} $.
\end{Theorem}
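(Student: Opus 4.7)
The plan is to mimic the classical reconstruction theorem for neutral Tannakian categories over a field, replacing ``finite-dimensional vector space'' everywhere by ``finitely generated projective $S$-module''. The crucial new input beyond the classical case is verifying that the reconstructed bialgebra is faithfully flat over $S$, which is ultimately why the Dedekind ring hypothesis is needed.

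First I would construct the representing Hopf algebra for $\iMG{\M} = \TensorAut{\omega}{\Lambda}$. For each $X \in \Lambda$, the fiber $\omega(X)$ is a finitely generated projective $S$-module, so $\omega(X)^\vee \otimes_S \omega(X)$ is again projective. Let $C$ be the coend of the functor $X \mapsto \omega(X)^\vee \otimes_S \omega(X)$ on $\Lambda$. Standard Tannakian formalism---using compatibility of $\omega$ with tensor products, the unit object, and duals (rigidity)---equips $C$ with a coalgebra structure, a commutative multiplication, and an antipode, so that $C$ becomes a commutative Hopf algebra. Set $\iMG{\M} := \Spec C$; by construction $\omega$ factors as $\Lambda \to \KatRep{S}{\iMG{\M}}{fp} \to \KatMod{S}$.

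Next I would establish that $C$ is faithfully flat over $S$. Flatness is immediate: each building block $\omega(X)^\vee \otimes_S \omega(X)$ is finitely generated projective, hence flat, and $C$ is a filtered colimit of flat modules. For surjectivity of $\Spec C \to \Spec S$, fix a prime $\mathfrak{p} \subset S$ and base-change $\omega$ to the residue field $\kappa(\mathfrak{p})$. The composite $\Lambda \to \KatVec{\kappa(\mathfrak{p})}$ is a symmetric monoidal $\kappa(\mathfrak{p})$-linear functor; after passing to the appropriate Tannakian quotient on which it becomes exact and faithful, classical Tannakian duality over the field $\kappa(\mathfrak{p})$ produces a non-trivial affine group scheme whose coordinate ring is a quotient of $C \otimes_S \kappa(\mathfrak{p})$. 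Hence this fiber is non-zero, giving surjectivity prime by prime.

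Finally I would show the induced functor $\omega \colon \Lambda \to \KatRep{S}{\iMG{\M}}{fp}$ is an equivalence. Faithfulness is part of the hypothesis on $\omega$. For fullness and essential surjectivity, one adapts the field-case argument: every finite projective representation $V$ of $\iMG{\M}$ arises as a subquotient of $\omega(X)$ for some $X \in \Lambda$ built from sums, tensor products, and duals of given objects, and one recovers the corresponding subobject of $\Lambda$ using the existence of kernels and images in $\Lambda$. The main obstacle I anticipate lies exactly here: one must show that kernels and images formed inside $\Lambda$ are carried by $\omega$ to the correct $S$-submodules, and that the resulting $S$-modules are projective rather than merely flat. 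The latter is handled by the Dedekind ring hypothesis, since every finitely generated flat module over a Dedekind ring is automatically projective; the former is precisely what the lattice axiom ``$\omega$ preserves kernels and images'' is designed to enforce.
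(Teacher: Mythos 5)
First, a point of comparison: the paper does not prove this statement at all --- it is quoted verbatim from Duong--Hai \cite[2.3.2]{Duong.Hai.2013} and used as a black box --- so your proposal can only be measured against the argument in that source, which does follow the same general reconstruction strategy (coend coalgebra, Hopf structure from rigidity, then an equivalence with finite projective representations). Judged on that basis, your outline of the construction of $C$ and of the equivalence is in the right spirit, but it contains a genuine gap exactly at the point you declare ``immediate'': flatness of $C$ over $S$. The coend of $X \mapsto \omega(X)^\vee \otimes_S \omega(X)$ is not a filtered colimit of these building blocks; it is a quotient (a coequalizer) of their direct sum by the relations coming from morphisms of $\Lambda$. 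One can present $C$ as a filtered colimit of coends over subcategories generated by finitely many objects, but each of those terms is a cokernel of a map of finitely generated projective $S$-modules, and over a Dedekind ring such a cokernel can perfectly well have torsion, i.e.\ fail to be flat. Since over a Dedekind ring flat $=$ torsion-free, proving that no torsion appears is the real content of faithful flatness, and it is precisely here that the lattice axioms (that $\omega$ takes values in finitely generated projective modules and preserves kernels and images, together with the comparison with the rigid abelian generic fibre $\Lambda_{\mathrm{Frac}(S)}$) are used in Duong--Hai, who show that $C$ is a filtered union of subcoalgebras that are finitely generated and projective over $S$. Your sketch uses none of this where it is needed.

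Conversely, the part you spend the most effort on --- surjectivity of $\Spec C \to \Spec S$ via residue fields --- is both unnecessary and shaky as written. It is unnecessary because $C$ is an augmented $S$-algebra: the counit gives a section $\Spec S \to \Spec C$ of the structure map, so surjectivity on spectra is automatic and faithful flatness reduces to flatness. It is shaky because the composite $\Lambda \to \KatMod{S} \to \KatVec{\kappa(\mathfrak{p})}$ at a closed point is not known to be a fibre functor on any rigid abelian category ($\Lambda$ is not abelian, and nothing guarantees exactness or faithfulness after reduction mod $\mathfrak{p}$), so ``passing to the appropriate Tannakian quotient'' hides exactly the kind of difficulty the lattice formalism is designed to control. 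For the final equivalence, your appeal to ``finitely generated flat over a Dedekind ring implies projective'' is the right ingredient, but fullness and essential surjectivity also require the comparison with the generic fibre (where Deligne--Milne applies) and a descent of subobjects back to $\Lambda$ using the preservation of kernels and images; as it stands this is asserted rather than proved. In summary: correct skeleton, but the faithful-flatness step --- the actual theorem --- is not established by your argument.
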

We call  the algebraic group $ {\iMG{\M}}{}:= \TensorAut{\omega_{Sht_A}}{\DoppelSpitz{\M}} $ associated to a  $\M \in \KatLshtuka{A_v,R} $ the \emph{integral motivic Galois group of $\M$}. Moreover, ${\iMG{\M}}{}$ is after  extension from $A_v$ to $Q_v$ the motivic Galois group of $\M$. Thus, going back to the categories $\KatLshtuka{A_v,R}$ and $\KatLshtuka{Q_v,R}$, the following diagram {commutes}
\begin{align*}
\begin{xy}
\xymatrix{ \KatLshtuka{A_v,R} \ar[d]_-{\TateTFunc{}} \ar[r]^-{}&  \KatLshtuka{Q_v,R}  \ar[d]_-{\TateVFunc{}}\\
	\KatRep{A_v}{\AbsGalois{K}}{fp} \ar[d]_-{\omega_{rep}} & \KatRep{Q_v}{\AbsGalois{K}}{cont}  \ar[d]_-{\omega_{rep}} \\
	\KatMod{A_v} \ar[r]_-{\otimes_{A_v}Q_v} & \KatVec{Q_v}}.
\end{xy}
\end{align*} and hence
\begin{align}\label{eq:Lattice=Motivic}
\MG{\M}= {\iMG{\M}}{}\times_{A_v} Q_v
\end{align}  holds for a $ \M \in \KatLshtuka{A_v,R}$; see \cite[Cor. 6.21]{Wedhorn.2004} for further properties.\\
Now consider a finite homomorphism $f:\F{v}\Laurrentreihe{z'}\to \F{v}\Laurrentreihe{z} $ of rings. Let $\gamma': \Fq \Laurrentreihe{z'} \to R$ be the morphism  sending  $z'\mapsto \zeta'$  and let $\gamma:\Fq \Laurrentreihe{z} \to R$ the  one sending $z \mapsto \zeta$. Thus $\gamma'=\gamma\circ f$ holds and we have an induced functor
\begin{align*}
\KatLshtuka{\F{v}\Laurrentreihe{z},R} &\to \KatLshtuka{\F{v}\Laurrentreihe{z'}, R}\\
\M  &\mapsto f_*(\M)
\end{align*}
where $ f_*(\M) =\M $ viewed as a $ R\Laurrentreihe{z'} $-module. The $z'$-rank of the local shtuka $f_*(\M)$ is given by the formula  $ \rk_{z'}(f_*(\M))=\rk_{{z}}\M \cdot \deg f$.\\
It is natural to ask for a functor $f^*:\KatLshtuka{\F{v}\Laurrentreihe{z'}, R}\to \KatLshtuka{\F{v}\Laurrentreihe{z},R} $ induced by the module-tensor product. However, it produces in general not a local shtuka in $\KatLshtuka{\F{v}\Laurrentreihe{z},R}$. For example let $\F{v}=\Fq$ and $f:\Fq\Laurrentreihe{z'}\to \Fq\Laurrentreihe{z}$ given by $z' \mapsto z^2$.
Then $\gamma$ is given by $ z \mapsto \zeta$ and $\gamma'=\gamma\circ f$ by $ z' \mapsto \zeta^2=:\zeta'$. Consider the Carlitz-shtuka $\Carlitz=(R\Laurrentreihe{z'}, z'-\zeta')$ in $\KatLshtuka{\F{q}\Laurrentreihe{z'}, R}$.
Then $f^*(\Carlitz)$ is given by the module $R\Laurrentreihe{z'}\otimes_{\Fq\Laurrentreihe{z'}} \Fq\Laurrentreihe{z}$ and the morphism $z^2-\zeta^2=(z-\zeta)(z+\zeta)$. It is only an isomorphism after inverting $z-\zeta $ and $z+\zeta$ and so not a local shtuka. To solve this issue, it suffice to consider only $z'$ which are positive $\mathrm{char}(\Fq)$-powers of $z$.
In general let $z'= z^{p^n} $ for $p=char(\Fq)$ and $n\in \mathbb{N}$ and define the functor $f^*$ by
\begin{align*}
f^*:\KatLshtuka{\F{v}\Laurrentreihe{z'}, R} &\to \KatLshtuka{\F{v}\Laurrentreihe{z}, R}\\
\M'&\mapsto f^*(\M'):= \M' \otimes_{\F{v}\Laurrentreihe{z'}} \F{v}\Laurrentreihe{z}
\end{align*}
\begin{prop}\label{prop:Adjoint}
	Let $z'=z^{p^n}$ with $n \in \mathbb{N}$ and $p=\mathrm{char}(\Fq)$. Then the functors  $ f^* $ and $ f_* $ dare adjoint, ie. $\Hom{f^*\M}{\N}{\KatLshtuka{\F{v}\Laurrentreihe{z},R}}= \Hom{\M}{f_*\N}{\KatLshtuka{\F{v}\Laurrentreihe{z'},R}}$ holds.
\end{prop}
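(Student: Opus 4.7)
My plan is to reduce the statement to the standard tensor–hom adjunction for modules and then track the extra $\tau$-compatibility required for shtuka morphisms. The hard work in the preceding paragraph of the paper has already pinned down why $z' = z^{p^n}$ is the right hypothesis; given this, the proof of the adjunction itself becomes essentially formal.

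First I would verify that $f^*\M' = \M' \otimes_{\RzS} \Rz$ really is an object of $\KatLshtuka{\F{v}\Laurrentreihe{z},R}$. The underlying module $\hat M' \otimes_{\RzS} \Rz$ is free of the same rank as $\hat M'$, because $\Rz$ is free of rank $p^n$ over $\RzS = R\Laurrentreihe{z^{p^n}}$ with basis $1, z, \dots, z^{p^n-1}$. Since $\hat\sigma$ fixes $z$ and hence $z'$, pulling back commutes with base change, so the induced map $\tau_{f^*\M'} := \tau_{\hat M'} \otimes \mathrm{id}$ is well defined. The decisive algebraic identity is
\begin{equation*}
z' - \zeta' \;=\; z^{p^n} - \zeta^{p^n} \;=\; (z - \zeta)^{p^n} \quad \text{in } \Rz,
\end{equation*}
which holds because $\mathrm{char}(\F{q}) = p$. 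Consequently inverting $z-\zeta$ in $\Rz$ inverts $z'-\zeta'$, and $\tau_{f^*\M'}$ becomes an isomorphism over $\Rz\bigl[\tfrac{1}{z-\zeta}\bigr]$, as demanded by \Cref{def: shtukas}.

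Next I would invoke ordinary tensor–hom adjunction at the level of underlying modules to produce a bijection
\begin{equation*}
\Hom{\hat M' \otimes_{\RzS} \Rz}{\hat N}{\Rz} \;\xlongrightarrow{\sim}\; \Hom{\hat M'}{\hat N}{\RzS},
\end{equation*}
sending $g$ to $\tilde g : m \mapsto g(m \otimes 1)$, with inverse $\tilde g \mapsto (m \otimes a \mapsto a \cdot \tilde g(m))$. This is the standard adjunction between extension and restriction of scalars along $\RzS \to \Rz$.

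The only verification that really needs a diagram chase is that this module bijection restricts to $\tau$-compatible maps on both sides. Using the canonical identification $\hat\sigma^*(f^*\M') = f^*(\hat\sigma^*\M')$ together with the tautology $\tau_{f_*\N} = \tau_{\hat N}$ viewed through $f$, one checks that the equation $g \circ \tau_{f^*\M'} = \tau_{\hat N} \circ \hat\sigma^* g$ characterising morphisms on the left-hand side transports under the bijection above to $\tilde g \circ \tau_{\hat M'} = \tau_{f_*\N} \circ \hat\sigma^* \tilde g$ on the right. I expect the main, though mild, obstacle to be purely bookkeeping: keeping the $\RzS$- and $\Rz$-linear structures straight while identifying morphisms that, after the module adjunction, are literally the same underlying map. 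The genuine mathematical content is the characteristic-$p$ identity displayed above, which is exactly what forces the hypothesis $z' = z^{p^n}$; once that is in hand, the adjunction is formal.
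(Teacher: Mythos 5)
Your proposal is correct and takes essentially the same route as the paper: both reduce to the extension/restriction-of-scalars adjunction for the underlying modules and then verify compatibility with the shtuka structure, the key inputs being that $\hat{\sigma}^*$ commutes with $f^*$ (since $\hat{\sigma}$ fixes $z$, hence $z'$) and the characteristic-$p$ identity $z^{p^n}-\zeta^{p^n}=(z-\zeta)^{p^n}$ which makes $f^*$ land in local shtukas. The only presentational difference is that you transport the $\tau$-compatibility condition through the hom-set bijection, whereas the paper checks that the usual unit $m\mapsto m\otimes 1$ and counit $a\otimes m\mapsto a\cdot m$ are morphisms of local shtukas --- the same verification packaged in unit/counit form.
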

\begin{proof}
	The functor $ f^* $ interchanges with $ \sigma^* $ as $\sigma$ is the identity restricted to the uniformizing parameter on $A_v$.\\
	Let $ \M' \in \KatLshtuka{\F{v}\Laurrentreihe{z'},R} $ and by extension to $R\Laurrentreihe{z'}$  we get  a homomorphism $ f^*_R: R\Laurrentreihe{z'} \to R\Laurrentreihe{{z}} $ commuting with $f^*$ and $\gamma$. 	The same argument holds for $ f_* $.
	The unit and counit are induced by the $\Rz$ and respectively $R\Laurrentreihe{z'}$-module structures. So it suffices to show their compatibility with the shtuka structure morphism.\\
	Let $ \eta: id_{\KatLshtuka{\F{v}\Laurrentreihe{z'}}} \to f_*\circ f^* $ given by
	\begin{align*}
	\eta_{\M'}:\M'= (M',\tau_{M'}) &\to  f_*\circ f^*(\M'):= (\M'\otimes_{\F{v}\Laurrentreihe{z'}} \F{v}\Laurrentreihe{z},{\tau_{f_*f^*M'}}:=(\tau_{M'}\otimes id))\\
	m &\mapsto m\otimes 1
	\end{align*}
	for an element  $\M' \in \KatLshtuka{\F{v}\Laurrentreihe{z'}} $. Let $m \in \sigma^*\M$. Then $\eta_{M'}(\tau_{M'}(m))=\tau_{M'}(m)\otimes 1$
	and $ \tau_{f_*f^*M'}\circ \sigma^* \eta_{\M'}(m)= (\tau_{M'}\otimes id)(\sigma^* \eta_{\M'}(m))= \tau_{M'}(m)\otimes 1$ holds, as $f^*\sigma^*=(f\otimes \mathrm{id}_{R})^*\circ (\mathrm{id}\otimes \mathrm{Frob}_q)=(f\otimes \mathrm{Frob}_q)^*=\sigma^*f^*$ holds.
	Thus is unit-morphism $\eta_{\M'}$ a shtuka morphism. 	The counit is given by the natural transformation $ \varepsilon:f^*f_* \to id_{\KatLshtuka{\F{v}\Laurrentreihe{z}}} $, which is for a $ \M \in \KatLshtuka{\F{v}\Laurrentreihe{z}} $ given by $ a \otimes m  \mapsto a\circ m $. Then the following diagram
	\begin{align*}
	\begin{xy}
	\xymatrixcolsep{3pc}\xymatrix{
		f^*f_*M \ar[rr]^-{\varepsilon_\M}& &  M  & b\otimes \tau_M(m) \ar@{|->}[r]& b\cdot \tau_M(m)\\
		\underset{= \sigma^*f^*f_*M}{\F{v}\Laurrentreihe{z}\otimes_{\F{v}\Laurrentreihe{z'}} \sigma^*M} \ar[u]^-{id_{\F{v}\Laurrentreihe{z}}\otimes \tau_M}\ar[rr]_-{\varepsilon_{\sigma^*M}}&  &\sigma^*M \ar[u]_-{\tau_M} & b\otimes m \ar@{|->}[u] \ar@{|->}[r]& b\cdot m \ar@{|->}[u]
	}
	\end{xy}
	\end{align*}  commutes.
\end{proof}
Let $  \KatMod{\KatLshtuka{\F{v}\Laurrentreihe{z'},R}}(\F{v}\Laurrentreihe{z}) $ be the category of $ \F{v}\Laurrentreihe{z}$-modules in $\KatLshtuka{\F{v}\Laurrentreihe{z'},R}$. It's objects are pairs $(\M,\phi)$, where $\M \in  \KatLshtuka{\F{v} \Laurrentreihe{z'},R}$
and $ \phi: \F{v}\Laurrentreihe{z}\to \End{R}{\M}$. A morphism of $\F{v}\Laurrentreihe{z}$-modules is a morphism of shtukas in $\KatLshtuka{\F{v}\Laurrentreihe{z'}}$ compatible with $\phi$.
\begin{prop}\label{Prop: Equivalence of sht with modulkat}
	There exists an equivalence of categories between \begin{align*}
	G:\KatMod{\KatLshtuka{\F{v}\Laurrentreihe{z'},R}}(\F{v}\Laurrentreihe{z}) \rightleftarrows \KatLshtuka{\F{v}\Laurrentreihe{z},R}:H
	\end{align*}
\end{prop}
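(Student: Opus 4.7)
The plan is to construct explicit functors $G$ and $H$ in both directions and then verify they are mutually quasi-inverse. For $H:\KatLshtuka{\F{v}\Laurrentreihe{z},R}\to \KatMod{\KatLshtuka{\F{v}\Laurrentreihe{z'},R}}(\F{v}\Laurrentreihe{z})$, I send a local shtuka $\N=(\hat{N},\tau_{\hat{N}})$ to the pair $(f_*\N,\phi_\N)$, where $\phi_\N\colon \F{v}\Laurrentreihe{z}\to \End{R}{f_*\N}$ is scalar multiplication on $\hat{N}$. Each $\phi_\N(a)$ is visibly $R\Laurrentreihe{z}$-linear, hence $R\Laurrentreihe{z'}$-linear, and it commutes with $\tau_{\hat N}$ because $\hat\sigma$ fixes $z$ and acts as the identity on $\F{v}\subseteq R$ (since $\hat\sigma|_R$ is the $q_v$-Frobenius). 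Thus $H(\N)$ is a well-defined object of the left-hand category, and $H$ acts on morphisms by the identity on underlying module maps.

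For $G$ in the other direction I proceed dually. Given $(\M,\phi)$ with $\M=(\hat{M},\tau_{\hat{M}})$, the commuting actions of $R\Laurrentreihe{z'}$ and $\F{v}\Laurrentreihe{z}$ on $\hat{M}$ amalgamate to an action of $R\Laurrentreihe{z'}\otimes_{\F{v}\Laurrentreihe{z'}}\F{v}\Laurrentreihe{z}$, which, since $z'=z^{p^n}$, is canonically isomorphic to $\Rz$ with $R\Laurrentreihe{z}$ being finite free of rank $p^n$ over $R\Laurrentreihe{z'}$ on the basis $1,z,\ldots,z^{p^n-1}$. The same $\tau_{\hat M}$ then serves as the $\Rz$-linear shtuka structure: commutation with $\phi$ is built into the assumption that $\phi$ takes values in shtuka endomorphisms, and the crucial identity $z'-\zeta'=z^{p^n}-\zeta^{p^n}=(z-\zeta)^{p^n}$ in characteristic $p$ shows that inverting $z'-\zeta'$ agrees with inverting $z-\zeta$, so $\tau_{\hat{M}}$ is still an isomorphism after inverting $z-\zeta$.

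Once both functors are in place, the natural transformations $GH\Rightarrow \mathrm{id}$ and $HG\Rightarrow \mathrm{id}$ are the identity maps on underlying $\Rz$-modules: starting from $\N\in\KatLshtuka{\F{v}\Laurrentreihe{z},R}$, forgetting to $R\Laurrentreihe{z'}$ and remembering the $\F{v}\Laurrentreihe{z}$-multiplication and then re-amalgamating tautologically recovers $\N$; conversely, starting with $(\M,\phi)$, reading off scalar multiplication by $\F{v}\Laurrentreihe{z}$ inside the reconstituted $\Rz$-action recovers exactly $\phi$. Functoriality and compatibility with $\tau$ are transported automatically.

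The one substantive check is that the underlying module of $G(\M,\phi)$ is genuinely \emph{free} of finite rank over $\Rz$, as required by Definition~\ref{def: shtukas}. By construction $\hat{M}$ is free of rank $s$ over $R\Laurrentreihe{z'}$ and $\Rz$ is locally free of rank $p^n$ over $R\Laurrentreihe{z'}$, so $\hat{M}$ is finitely generated over the Noetherian local ring $\Rz$. To upgrade this to freeness I reduce modulo the maximal ideal $(\mathfrak{m}_R, z)$ of $\Rz$: since $z^{p^n}=z'$ and $\zeta^{p^n}=\zeta'$, the residue field is the same as the residue field of $R\Laurrentreihe{z'}$ mod $(\mathfrak{m}_R,z')$, so an $R\Laurrentreihe{z'}$-basis of $\hat M$ descends to a spanning set modulo $(\mathfrak m_R, z)$ of cardinality $s$; extracting any $s/p^n$ elements whose images form a basis of $\hat M/(\mathfrak m_R, z)$ over the residue field and lifting back gives, by the complete Nakayama lemma applied to the $(\mathfrak m_R,z)$-adically complete module $\hat{M}$, an $\Rz$-basis. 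This completeness/freeness argument is the only non-formal step; everything else is bookkeeping.
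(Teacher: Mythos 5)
Your construction of $G$ and $H$ is the same as the paper's (reinterpret the module structure via $R\Laurrentreihe{z'}\otimes_{\F{v}\Laurrentreihe{z'}}\F{v}\Laurrentreihe{z}\cong \Rz$ in one direction, restrict along $f$ and remember the $\F{v}\Laurrentreihe{z}$-action in the other), and you are right that the only substantive point is that the underlying module of $G(\M,\phi)$ is finite free over $\Rz$ — a point the paper's own proof passes over with just the rank formula. But your argument for exactly this point has a gap. You reduce modulo the maximal ideal $(\mathfrak{m}_R,z)$ of $\Rz$ and propose to ``extract $s/p^n$ elements whose images form a basis of $\hat{M}/(\mathfrak{m}_R,z)\hat{M}$''; there is no a priori reason this fiber has $k$-dimension $s/p^n$. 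It is a module over $k[z]/(z^{p^n})$ of total $k$-dimension $s$, and its dimension after dividing by $z$ can be anything between $s/p^n$ and $s$; that it equals $s/p^n$ is essentially equivalent to the freeness you are trying to prove, so the step is circular. Moreover Nakayama (complete or not) only produces a generating set from a basis of the fiber, not linear independence, so even granting the dimension count you would still owe an argument that the resulting surjection $\Rz^{s/p^n}\to\hat{M}$ is injective (that part is easy: over $R\Laurrentreihe{z'}$ it is a surjection of free modules of the same finite rank, hence an isomorphism — but it only becomes available once the fiber dimension is known).

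The claim itself is true and can be repaired. Reduce only modulo a uniformizer $\pi$ of $R$: $\hat{M}/\pi\hat{M}$ is finite free over $k\Laurrentreihe{z'}$, in particular $z'$-torsion-free, hence $z$-torsion-free as a module over the discrete valuation ring $k\Laurrentreihe{z}$, hence free over $k\Laurrentreihe{z}$. Lift a $k\Laurrentreihe{z}$-basis to $\hat{M}$; the induced map $\Rz^{m}\to\hat{M}$ is surjective by $\pi$-adic completeness of the finitely generated $\Rz$-module $\hat{M}$ together with Nakayama, and injective because it is an isomorphism modulo $\pi$ and $\hat{M}$ is $\pi$-torsion-free (being $R\Laurrentreihe{z'}$-free), so any element of the kernel lies in $\bigcap_n \pi^n\Rz^m=0$. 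Alternatively, and more quickly: since $\hat{M}$ is free over $R\Laurrentreihe{z'}$, the pair $(\pi,z')$ is a regular sequence on $\hat{M}$, so $\hat{M}$ is a maximal Cohen--Macaulay module over the two-dimensional regular local ring $\Rz$ and is therefore free (Auslander--Buchsbaum). With either repair in place, the rest of your proof — the compatibility of $\tau$ with the amalgamated $\Rz$-structure via $z'-\zeta'=(z-\zeta)^{p^n}$, and the identity unit and counit — is correct and matches the paper's intent.
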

\begin{proof}
	We will construct the functors $G$ and $H$.\\
	We define $G$ as the functor, which sends an element $ (\M'=(M',\tau_{M'}),\phi) $ to  $\M'$, but with $ M' $ considered as a module over the tensor product $ R\Laurrentreihe{z'} \otimes_{\F{v}\Laurrentreihe{z'}} \F{v}\Laurrentreihe{z} = \Rz $ via $\phi$. We have that $\rk_{R\Laurrentreihe{z'}}\M' = [\F{v}\Laurrentreihe{z}:\F{v}\Laurrentreihe{z'}] \cdot \rk_{\Rz}\M'$ holds, which is possible as the extension by $f^*$ is finite.\\
	We define $H$ as the functor, which considers a $ \M \in \KatLshtuka{\F{v}\Laurrentreihe{z}} $ as a $ R\Laurrentreihe{z'} $-module via the ring homomorphism $f$. The morphism   $ \phi:  \F{v}\Laurrentreihe{z} \to \End{}{\M}$ comes from the internal module structure of $ \M $. Here we have
	$ \rk_{R\Laurrentreihe{z'}} \M = \rk_{\Rz} \M \cdot[\F{v}\Laurrentreihe{z}:\F{v}\Laurrentreihe{z'}] $.
\end{proof}
By the remark after definition~\ref{Def: Wedhorn Extension}, $f^*$ and $f_*$ can be extended to  $Q_v$-linear functors on $\KatLshtuka{Q_v,R}$, maintaining their properties. Especially, proposition~\ref{Prop: Equivalence of sht with modulkat} can be extended to $\KatLshtuka{Q_v,R}$. We denote them in the same way.
\begin{prop}\label{prop:Faser von GM}
	Let $z'=z^{p^n}$ for  $n \in  \mathbb{N}$ and $p=\mathrm{char}(\Fq)$. Let $ \M' \in \KatLshtuka{\F{v}\DoppelRund{z'},R} $ and $ f^*: \F{v}\DoppelRund{z'}\to \F{v}\DoppelRund{z} $. Then there exists  an equivalence of algebraic groups  $ \MG{f^*\M'} = \MG{\M'} \times_{\F{v}\DoppelRund{z'}} \F{v}\DoppelRund{z}$ over $\F{v}\DoppelRund{z}$.
\end{prop}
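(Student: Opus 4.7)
The plan is to reduce the statement to formal Tannakian duality by checking that $f^*$ intertwines the fiber functors up to base change along $Q_v' \to Q_v$, where I write $Q_v' := \F{v}\DoppelRund{z'}$ and $Q_v := \F{v}\DoppelRund{z}$ for brevity. The starting point is that $f^*$ is an exact $Q_v'$-linear tensor functor $\KatLshtuka{Q_v',R} \to \KatLshtuka{Q_v,R}$: exactness comes from the fact that $\F{v}\Laurrentreihe{z'} \to \F{v}\Laurrentreihe{z}$ is a finite free (hence faithfully flat) extension of degree $p^n$, and compatibility with $\otimes$ and duals is immediate from the definition of $f^*$ as a module base change. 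In particular the Tannakian subcategory $\DoppelSpitz{f^*\M'}$ coincides with the essential image $f^*(\DoppelSpitz{\M'})$, because $f^*$ preserves subquotients, tensor products, and duals, and $\DoppelSpitz{\M'}$ is generated from $\M'$ by these operations.

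The central step is to compare fiber functors. I would like to exhibit, for every $\N' \in \DoppelSpitz{\M'}$, a natural $\AbsGalois{K}$-equivariant isomorphism
\begin{align*}
\TateVFunc{f^*\N'}\;\cong\;\TateVFunc{\N'} \otimes_{Q_v'} Q_v.
\end{align*}
This reduces to a direct computation on $\tau$-invariants: since $\hat{\sigma}$ acts trivially on both uniformizers, taking $\tau$-invariants in $f^*\N'\otimes_{\Rz}K^{sep}\DoppelRund{z}$ may be carried out after first tensoring $\N'\otimes_{R\Laurrentreihe{z'}}K^{sep}\DoppelRund{z'}$ up to $K^{sep}\DoppelRund{z}$. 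Faithful flatness of $Q_v'\to Q_v$ and the freeness of $\TateVFunc{\N'}$ as a $Q_v'$-module (of rank $\mathrm{rk}\,\N'$, cf.\ the discussion after Definition~\ref{Def: Tate modules}) make the formation of invariants commute with this base change. Writing this isomorphism in terms of a Tate-module basis $U$ chosen as in Example~\ref{ex: Carlitz tate module} gives compatibility of Galois actions for free.

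Combining the two steps yields the natural isomorphism of tensor functors $\omega_{Sht_Q}\circ f^* \cong (-\otimes_{Q_v'}Q_v)\circ\omega_{Sht_Q}$ on $\DoppelSpitz{\M'}$. Invoking Tannakian duality as recalled in the paragraph containing~\eqref{eq:Lattice=Motivic} and the standard behaviour of tensor automorphism groups under base change (e.g.\ \cite[Cor.\ 6.21]{Wedhorn.2004}), the automorphism group of the composite fiber functor is $\MG{\M'}\times_{Q_v'}Q_v$, while Step~1 identifies it with $\MG{f^*\M'}$. This yields the claimed equivalence.

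The main obstacle I expect is precisely the compatibility in the middle step: proving that the $\tau$-invariants used to define $\check V_v$ commute with the base change $f^*$. Once this is established the conclusion is formal. Everything else---exactness, preservation of tensor structure, and the passage from fiber functors to affine group schemes---is either immediate from the definitions or a direct consequence of the Tannakian formalism already set up in the preceding section.
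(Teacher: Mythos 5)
Your middle step -- the natural Galois-equivariant isomorphism $\TateVFunc{f^*\N'}\cong\TateVFunc{\N'}\otimes_{\F{v}\DoppelRund{z'}}\F{v}\DoppelRund{z}$ -- is correct and unproblematic: $\F{v}\Laurrentreihe{z}$ is finite free over $\F{v}\Laurrentreihe{z'}$, $K^{sep}\DoppelRund{z}=K^{sep}\DoppelRund{z'}\otimes_{\F{v}\DoppelRund{z'}}\F{v}\DoppelRund{z}$, and $\tau$-invariants (a kernel) commute with this flat base change. So the step you single out as the main obstacle is in fact the easy part. The genuine gap is your Step 1, the assertion that $\DoppelSpitz{f^*\M'}$ coincides with the essential image of $f^*$ restricted to $\DoppelSpitz{\M'}$. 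The justification ``$f^*$ preserves subquotients, tensor products and duals'' does not give this: $\DoppelSpitz{f^*\M'}$ is by definition closed under subquotients taken in $\KatLshtuka{\F{v}\DoppelRund{z},R}$, and a subobject of $f^*\N'$ formed there corresponds, under Tannakian duality, to an $\F{v}\DoppelRund{z}$-subrepresentation of $\TateVFunc{\N'}\otimes_{\F{v}\DoppelRund{z'}}\F{v}\DoppelRund{z}$, which has no a priori reason to be (isomorphic to) the base change of an object of $\DoppelSpitz{\M'}$; in the analogous situation of a separable field extension this descent statement is simply false (non-split tori), so it cannot follow from the formal properties of $f^*$ you list. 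Without it, the fiber-functor compatibility you establish only yields a closed immersion $\MG{f^*\M'}\hookrightarrow\MG{\M'}\times_{\F{v}\DoppelRund{z'}}\F{v}\DoppelRund{z}$ (every object of $\DoppelSpitz{f^*\M'}$ is a subquotient of objects pulled back from $\DoppelSpitz{\M'}$); the faithful flatness, i.e.\ the surjectivity needed for the claimed equality, is exactly what is missing, and it is not ``formal''.

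This is precisely the point the paper's proof is organized around: instead of claiming essential surjectivity of $f^*$, it identifies the target category with the scalar extension of the source. Proposition~\ref{Prop: Equivalence of sht with modulkat} realizes $\KatLshtuka{\F{v}\Laurrentreihe{z},R}$ as the category of $\F{v}\Laurrentreihe{z}$-modules in $\KatLshtuka{\F{v}\Laurrentreihe{z'},R}$, and then \cite[Prop.~3.11, Rem.~3.12]{Deligne.Milne.2018} (applied to the composite fiber functor $\omega'=\underline{\TateVFunc{}}(-)\otimes_{\F{v}\DoppelRund{z'}}\F{v}\DoppelRund{z}$) give the equivalence $\KatRep{\F{v}\DoppelRund{z}}{\MG{f^*\M'}}{}\cong\KatRep{\F{v}\DoppelRund{z}}{(\MG{\M'})_{\F{v}\DoppelRund{z}}}{}$ and hence the base-change statement for the groups. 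To repair your argument you would either have to prove the descent statement behind Step 1 directly for the purely inseparable extension $\F{v}\DoppelRund{z'}\subset\F{v}\DoppelRund{z}$ (together with the Hom-extension $\Hom{f^*X}{f^*Y}{\KatLshtuka{\F{v}\DoppelRund{z},R}}\cong\Hom{X}{Y}{\KatLshtuka{\F{v}\DoppelRund{z'},R}}\otimes_{\F{v}\DoppelRund{z'}}\F{v}\DoppelRund{z}$, which you also use implicitly), or route the proof through the module-category equivalence as the paper does.
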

\begin{proof}
	Let $ \M' \in \KatLshtuka{\F{v}\DoppelRund{z'}} $. Then by Tannakian duality $ \DoppelSpitz{\M'}\cong \KatRep{Q_v}{\MG{\M'}}{} $ is an equivalence of categories.
	Denote by $ \DoppelSpitz{f^*\M'} $ the strictly full Tannakian subcategory of $ \KatLshtuka{\F{v}\DoppelRund{z}} $ generated of the essential image of $ f^* $ restricted to $\DoppelSpitz{\M'}$. It follows by the work of Deligne and Milne, cf. \cite[Prop. 3.11]{Deligne.Milne.2018}, and proposition~\ref{Prop: Equivalence of sht with modulkat} that
	\begin{align}\label{eq: equivalenceDM}
	\KatRep{\F{v}\DoppelRund{z}}{\MG{f^*\M'}}{} = \KatRep{\F{v}\DoppelRund{z}}{\MG{\M'}}{}
	\end{align}
	holds. By \cite[Rem. 3.12]{Deligne.Milne.2018} we consider the following situation
	\begin{align*}
	\begin{xy}
	\xymatrix{
		&\text{ \texttt{Vec}}_{\F{v}\DoppelRund{z}}\\
		\KatRep{\F{v}\DoppelRund{z'}}{\MG{\M'}}{}\ar[r]_-{\underline{\TateVFunc{}}} \ar[ru]^-{\omega'}  & \text{\texttt{Vec}}_{\F{v}\DoppelRund{z'}} \ar[u]_-{\otimes_{\F{v}\DoppelRund{z'}} \F{v}\DoppelRund{z} }
	}
	\end{xy}
	\end{align*}
	with $ \omega'(\M)=\underline{\TateVFunc{}}\M\otimes_{\F{v}\DoppelRund{z'}} \F{v}\DoppelRund{z}$ for $ \M \in \DoppelSpitz{\M'} $.  Set  $(\MG{\M'})_{\F{v}\DoppelRund{z}}:=\TensorAut{\omega'}{\DoppelSpitz{\M'}}$.
	This induces again by \cite[Rem. 3.12]{Deligne.Milne.2018}  the equivalence $\KatRep{\F{v}\DoppelRund{z}}{\MG{\M'}}{}\cong \KatRep{\F{v}\DoppelRund{z}}{(\MG{\M'})_{\F{v}\DoppelRund{z}}}{}$.
	Here $  {\MG{\M'}}_{\F{v}\DoppelRund{z}}  $ is the usual base change of algebraic groups.
\end{proof}
Note that in the case of local shtukas the tensor product is already defined over a finite extension. So there is no need to go to  an Ind-completed category, as all objects have a finite basis, in contrary to the general case  studied in the paper of Deligne and Milne \cite{Deligne.Milne.2018}. By that, we mean that the field extension construction of them demand the existence of  an external tensor product $\underline{\otimes}$ with the category  of vector spaces.
By proposition~\ref{Prop: Equivalence of sht with modulkat} the functor $ f_*f^* $ defines $ \KatLshtuka{\F{v}\Laurrentreihe{z}} $ as  a $ \KatMod{\Rz} $-module category,
see \cite[7.1.1]{Etingof.Gelaki.Nikshych.Ostrik.2015} for definition of this category. This structure fulfills the requirements of the external tensor product described above.\\
The association of an algebraic group to a Tannakian lattice is done in a similar sense as Deligne's for fields. So with equation~\eqref{eq:Lattice=Motivic} we can use the proof of proposition~\ref{prop:Faser von GM} also for our Tannakian lattice $ \KatLshtuka{\F{v}\Laurrentreihe{z'}} $  and get the following.
\begin{Corollary}\label{Cor:Faser von LGM}
	Let $ \M' \in \KatLshtuka{\F{v}\Laurrentreihe{z'}} $  and let $z'=z^{p^n}$ for $n\in\mathbb{N}$ and $p=\mathrm{char}(\Fq)$.  Then there exists  an equivalence of algebraic groups  ${\iMG{f^*\M'}}{}={\iMG{\M'}}{} \times_{\F{v}\Laurrentreihe{z'}} \F{v}\Laurrentreihe{z}$.
	Moreover, it is compatible with the field extension to $ \MG{\M'} $.
\end{Corollary}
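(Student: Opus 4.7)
The strategy is to transplant the argument of Proposition~\ref{prop:Faser von GM} to the Tannakian-lattice setting and then verify compatibility with the generic extension via equation~\eqref{eq:Lattice=Motivic}. First, I would invoke the Duong--Hai theorem \cite[2.3.2]{Duong.Hai.2013} to identify $\DoppelSpitz{\M'}$ with the category $\KatRep{\F{v}\Laurrentreihe{z'}}{\iMG{\M'}{}}{fp}$ of finite projective representations of $\iMG{\M'}{}$, and likewise for the strictly full Tannakian-lattice subcategory $\DoppelSpitz{f^*\M'} \subseteq \KatLshtuka{\F{v}\Laurrentreihe{z},R}$ generated by the essential image of $\DoppelSpitz{\M'}$ under $f^*$. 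By Proposition~\ref{Prop: Equivalence of sht with modulkat}, this essential image is nothing other than $\DoppelSpitz{\M'}$ regarded as an $\F{v}\Laurrentreihe{z}$-module category via $\phi$, so the two representation categories agree in exact parallel to equation~\eqref{eq: equivalenceDM}.

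Next, I would mimic the fiber-functor diagram in the proof of Proposition~\ref{prop:Faser von GM}: set $\omega' := \omega_{Sht_A} \otimes_{\F{v}\Laurrentreihe{z'}} \F{v}\Laurrentreihe{z}$ on $\DoppelSpitz{\M'}$ and compute $\TensorAut{\omega'}{\DoppelSpitz{\M'}} = \iMG{\M'}{} \times_{\F{v}\Laurrentreihe{z'}} \F{v}\Laurrentreihe{z}$ using the base-change formalism of \cite[Rem.~3.12]{Deligne.Milne.2018}. Combined with the previous paragraph this identifies $\iMG{f^*\M'}{}$ with the asserted base change. The application of the Deligne--Milne construction is legitimate here because the extension $f$ is faithfully flat, so that Wedhorn's local scalar extension (cf.~\cite[\S 3.6]{Wedhorn.2004}) preserves the kernels, images, and faithfulness conditions that Duong--Hai's Tannakian reconstruction relies on, and keeps $\omega'$ exact on the lattice.

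For the compatibility with the rational motivic Galois group I would combine equation~\eqref{eq:Lattice=Motivic}, namely $\MG{\M'} = \iMG{\M'}{} \times_{A_v} Q_v$, with Proposition~\ref{prop:Faser von GM}. The resulting square of base changes commutes because the two scalar extensions $A_v \to Q_v$ and $\F{v}\Laurrentreihe{z'} \to \F{v}\Laurrentreihe{z}$ are both flat and commute with each other, so base change of affine group schemes along them is compatible by general nonsense.

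The main obstacle is the justification at the very start: one must be sure that the Deligne--Milne change-of-rings procedure, which is originally formulated for Tannakian categories over fields, really transfers verbatim to Tannakian lattices over the Dedekind ring $\F{v}\Laurrentreihe{z'}$. Faithful flatness of $f$ is the decisive input; once it is combined with the fact that $\KatLshtuka{\F{v}\Laurrentreihe{z},R}$ is a $\KatMod{\Rz}$-module category via $f_* f^*$ (so no Ind-completion is needed, as observed after Proposition~\ref{prop:Faser von GM}), the computation of $\TensorAut{\omega'}{-}$ proceeds formally as in the field case and the corollary follows.
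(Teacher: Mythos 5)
Your proposal is correct and follows essentially the same route as the paper: transfer the proof of Proposition~\ref{prop:Faser von GM} to the lattice setting via the Duong--Hai reconstruction and Proposition~\ref{Prop: Equivalence of sht with modulkat}, using the $\KatMod{\Rz}$-module structure from $f_*f^*$ to avoid Ind-completion, and then deduce compatibility with $\MG{\M'}$ from equation~\eqref{eq:Lattice=Motivic}. The extra care you take in noting that $f$ is finite free (hence faithfully flat) and that the two base changes commute only makes explicit what the paper leaves implicit.
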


\section{A condition for openness}
In section~\ref{sec: Tate modules and Galois representation} we constructed a Galois representation
\begin{align*}
\GR{\M}: \AbsGalois{K} \to \HalgGp{\M}(Q_v)
\end{align*}
associated to a local shtuka $\M$ of rank $r$ over $\Spec R$. For a general $\M$, there exists by Tannakian  theory an inclusion $ \HalgGp{\M}\subseteq \MG{\M}\subseteq \mathrm{GL}_r$ of algebraic groups over $Q_v$. We have seen that for all $g \in \AbsGalois{K}$ its image under $\GR{\M}$ is contained in $\mathrm{Aut}_{Q_v}(\TateVFunc{\M})$.
In addition, $\GR{\M}(g)\in \TensorAut{\omega_{Sht_Q}}{\DoppelSpitz{\M}}(Q_v)=\MG{\M}(Q_v)$ holds. So the image of $\AbsGalois{K}$ under $\GR{\M}$ and respectively its Zariski closure $\HalgGp{\M}$ is already contained in $\MG{\M}(Q_v)$ and in $\MG{\M}$, respectively.
If the rank of the local shtuka $\M$  is $1$, this inclusion becomes an equality. In the last section we have computed the integral motivic Galois group $ {\iMG{\M}}$, which is an algebraic group over $A_v$ with   $\MG{\M}= \iMG{\M}\times Q_v$.
Let $K^{un}$ and $R^{un}$ the maximal unramified extension of $K$ and $R$.
\begin{prop}\label{prop:Form local rank 1 shtuka}
	Every local shtuka $\M$ of rank $1$ and virtual dimension $d\in \mathbb{Z}$ over $R^{un}$ is isomorphic to the $d$-th tensor power $\Carlitz^{\otimes d}$ of the  Carlitz-shtuka over $R^{un}$.
\end{prop}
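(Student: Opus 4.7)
By Corollary~\ref{Cor:Form local shtuka} applied with $R^{un}$ in place of $R$, I fix a presentation $\M \cong (R^{un}\Laurrentreihe{z},\,c\cdot(z-\zeta)^d)$ with some $c \in (R^{un}\Laurrentreihe{z})^\times$. Since $\Carlitz^{\otimes d} = (R^{un}\Laurrentreihe{z},\,(z-\zeta)^d)$, a morphism $\Carlitz^{\otimes d}\to\M$ of $R^{un}\Laurrentreihe{z}$-modules is just multiplication by some $u \in R^{un}\Laurrentreihe{z}$, and the compatibility with the $\tau$-maps reduces, after cancelling the common factor $(z-\zeta)^d$ in the localization at $z-\zeta$, to the twisted equation
\[
u \;=\; c\cdot\hat\sigma(u).
\]
Such a $u$ defines an isomorphism of shtukas precisely when it is a unit of $R^{un}\Laurrentreihe{z}$, i.e.\ when its constant term $u_0$ is a unit in $R^{un}$. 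Thus the proposition reduces to producing a unit solution of this equation.

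\textbf{Solving the equation.}  I will construct $u = \sum_{i\geq 0} u_i z^i$ coefficient by coefficient. Writing $c = \sum c_i z^i$ and using $\hat\sigma(z)=z$ and $\hat\sigma(a)=a^{q_v}$ on coefficients, the equation becomes the recursion
\[
u_i - c_0\, u_i^{q_v} \;=\; \sum_{j=1}^{i} c_j\, u_{i-j}^{q_v}, \qquad i\geq 0,
\]
with empty right-hand side at $i=0$. For $i=0$ I need a unit $u_0$ with $u_0^{q_v-1}=c_0^{-1}$; since $q_v-1$ is coprime to $p$ and $R^{un}$ is henselian with algebraically closed residue field $\overline{\F{v}}$, such a $u_0$ exists by Hensel's lemma. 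For $i\geq 1$ the right-hand side $f_i$ is already determined by $u_0,\dots,u_{i-1}$, and one must solve $P_i(X):=X-c_0 X^{q_v}-f_i=0$. Its reduction modulo $\mathfrak{m}_{R^{un}}$ has degree $q_v$ over $\overline{\F{v}}$ and hence a root, while the derivative $P_i'(X)=1-c_0 q_v X^{q_v-1}=1$ is a unit because $p\mid q_v$; Hensel's lemma then lifts the root to $u_i\in R^{un}$.

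\textbf{Conclusion and main obstacle.}  The assembled series $u=\sum u_i z^i$ lies in $R^{un}\Laurrentreihe{z}$ with $u_0\in(R^{un})^\times$, hence is a unit of $R^{un}\Laurrentreihe{z}$, and multiplication by $u$ yields the desired isomorphism $\Carlitz^{\otimes d}\xrightarrow{\sim}\M$. The technical heart of the argument is the Hensel lift at each coefficient; it relies on two features of $R^{un}$ that are not available over $R$ in general, namely the algebraic closedness of the residue field (used at $i=0$ to extract a $(q_v-1)$-th root of $c_0^{-1}$ and at $i\geq 1$ to find a root of the degree-$q_v$ polynomial $\bar P_i$), and the Artin--Schreier form $X-c_0 X^{q_v}$ of the higher-order equations, whose derivative collapses to the constant $1$ thanks to $p\mid q_v$.
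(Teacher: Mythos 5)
Your proof is correct and follows essentially the same route as the paper: reduce via Corollary~\ref{Cor:Form local shtuka} to $\M\cong(R^{un}\Laurrentreihe{z},c\,(z-\zeta)^d)$ and then solve the twisted equation coefficientwise over $R^{un}$ (you solve $u=c\,\hat\sigma(u)$ for a map $\Carlitz^{\otimes d}\to\M$, the paper solves the equivalent $\hat\sigma(u)=c\,u$ in the other direction). You merely make explicit, via Hensel's lemma and the separability of the Artin--Schreier-type equations, the solvability in $R^{un}$ that the paper only asserts.
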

\begin{proof}
	By corollary~\ref{Cor:Form local shtuka} $\M=(R^{un}\Laurrentreihe{z},\tau_M=c \cdot(z-\zeta)^d$. Write $c=\sum_{i=0}^\infty c_i z^i$ with $c_i \in R^{un}$ and $c_0\in (R^{un})^\times$.
	The isomorphism is given by sending $1\in \M$ to $u=\sum_{i=0}^\infty u_i z^i\in \Carlitz^{\otimes d}$ which satisfies $\sigma(u)= c \cdot u$ that is $\sum_{i=0}^\infty u_i^q z^i= \sum_{i=0}^\infty\sum_{j=0}^{i} c_ju_{i-j} z^i$,
	or equivalently $u_0^q=c_0u_0$ and $(u_0^{-1} u_i)^q-u_0^{-1}u_i=\sum_{j=1}^{i}c_0^{-1}c_{i-j}u_0^{-1}u_j$. Since $c_0 \in (R^{un})^\times$ all these equations indeed have solutions  $u_i \in R^{un}$ and $u_0 \in (R^{un})^\times$.
\end{proof}
Note that isogenous shtukas has the same rank and dimension. Furthermore the category $\KatLshtuka{Q_v,R}$ is the category of shtukas except isogeny, so  all isogenous shtukas are isomorphic.
\begin{Theorem}\label{prop: Openness rank one shtuka}
	Let $ \M $ be a local shtuka of rank $1$ and virtual dimension $d>0$ over $R$ and $K=\mathrm{Frac}(R)$. Let $ d'  \in \Z$ be the largest number such that $ d=p^e\cdot d' $ holds for $p=\mathrm{char}(\Fq)$, $ p \nshortmid d' $ and $e\geq 0$.
	Then $\GR{\M}(\AbsGalois{K})$ is contained and open in $\mathrm{GL}_1(\F{v}\Laurrentreihe{z^{p^e}})$. In particular, $\GR{\M}(\AbsGalois{K})$ is open in $\HalgGp{\M}(Q_v)$ if and only if $p\nmid d$.
\end{Theorem}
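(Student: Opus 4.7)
The plan is to reduce to the Carlitz tensor power via Proposition~\ref{prop:Form local rank 1 shtuka} and then exploit the Tannakian descent afforded by Corollary~\ref{Cor:Faser von LGM}. The driving algebraic identity is $(z-\zeta)^d=(z-\zeta)^{p^ed'}=(z^{p^e}-\zeta^{p^e})^{d'}$ in characteristic $p$. Setting $\N:=(\Rz, z^{p^e}-\zeta^{p^e})\in\KatLshtuka{\F{v}\Laurrentreihe{z^{p^e}},R}$ and writing $f\colon\F{v}\Laurrentreihe{z^{p^e}}\hookrightarrow\F{v}\Laurrentreihe{z}$ for the inclusion, this identity gives $f^{\ast}(\N^{\otimes d'})=\Carlitz^{\otimes d}$.

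For the containment $\GR{\M}(\AbsGalois{K})\subseteq\mathrm{GL}_1(\F{v}\Laurrentreihe{z^{p^e}})$, Proposition~\ref{prop:Form local rank 1 shtuka} yields $\M\otimes R^{un}\cong\Carlitz^{\otimes d}\cong f^{\ast}(\N^{\otimes d'})$, and then Corollary~\ref{Cor:Faser von LGM} identifies $\iMG{\M}$ with $\iMG{\N^{\otimes d'}}\times_{\F{v}\Laurrentreihe{z^{p^e}}}\F{v}\Laurrentreihe{z}$. Because the Duong--Hai equivalence between $\DoppelSpitz{\M}$ and the representations of $\iMG{\M}$ is compatible with the fiber functor $\TateTFunc{}$, the Tate module $\TateTFunc{\M}$ inherits an underlying $\F{v}\Laurrentreihe{z^{p^e}}$-module structure preserved by all of $\AbsGalois{K}$, and so $\GR{\M}$ takes values in $\F{v}\Laurrentreihe{z^{p^e}}^\times\subseteq\F{v}\Laurrentreihe{z}^\times$.

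For the openness, I would compute the image explicitly. By Example~\ref{Ex: Carlitz representation} the Carlitz character $\GR{\Carlitz}$ already surjects onto $\F{v}\Laurrentreihe{z}^\times$ after restriction to $\AbsGalois{K^{un}}$; combined with $\M\otimes R^{un}\cong\Carlitz^{\otimes d}$ this yields $\GR{\M}(\AbsGalois{K^{un}})=(\F{v}\Laurrentreihe{z}^\times)^d$. Since $\F{v}$ is perfect, raising to $p^e$ sends $\F{v}\Laurrentreihe{z}^\times$ surjectively onto $\F{v}\Laurrentreihe{z^{p^e}}^\times$, so this image equals $(\F{v}\Laurrentreihe{z^{p^e}}^\times)^{d'}$. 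As $\gcd(d',p)=1$, the $d'$-th power is bijective on the pro-$p$ principal units $1+z^{p^e}\F{v}\Laurrentreihe{z^{p^e}}$ and has finite-index image on $\F{v}^\times$, so $(\F{v}\Laurrentreihe{z^{p^e}}^\times)^{d'}$ is open in $\F{v}\Laurrentreihe{z^{p^e}}^\times$; a fortiori $\GR{\M}(\AbsGalois{K})$ is open there. The ``in particular'' is then a formality: when $p\nmid d$ one has $e=0$ and the target equals the compact part of $\HalgGp{\M}(Q_v)$, whereas when $p\mid d$ the image is forced into a proper closed subgroup of $\F{v}\Laurrentreihe{z}^\times$ and so cannot be open in $\HalgGp{\M}(Q_v)$.

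The principal obstacle will be the containment step: one must verify that the $\F{v}\Laurrentreihe{z^{p^e}}$-structure produced by Corollary~\ref{Cor:Faser von LGM} is preserved by the entire $\AbsGalois{K}$ and not merely by the open subgroup $\AbsGalois{K^{un}}$ over which the isomorphism $\M\cong f^{\ast}(\N^{\otimes d'})$ is directly visible. The key point should be that $\DoppelSpitz{\M}$ and the fiber functor $\TateTFunc{}$ are intrinsic to $\M$ over $R$, so the resulting descent of $\iMG{\M}$, together with the functoriality in Corollary~\ref{Cor:Faser von LGM}, forces the finer lattice structure on $\TateTFunc{\M}$ independently of any chosen isomorphism over $R^{un}$.
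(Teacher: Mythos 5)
Your reduction and containment steps follow the paper's own route: Proposition~\ref{prop:Form local rank 1 shtuka}, the identity $(z-\zeta)^{p^ed'}=(z^{p^e}-\zeta^{p^e})^{d'}$, and the base-change statement of Corollary~\ref{Cor:Faser von LGM} are exactly the ingredients used there. Where you genuinely diverge is in proving openness of the image inside $\F{v}\Laurrentreihe{z^{p^e}}^\times$. The paper computes the image explicitly: it expands $a=(l'_+)^{d'}$ (the inverse of a generator of $\TateTFunc{\M}$), introduces the subtower $\overline{K}^{un,d'}_n$ generated by the coefficients $a_0,\dots,a_n$, bounds $[\overline{K}^{un}_n:\overline{K}^{un,d'}_n]\leq d'$, and concludes at every finite level that the image of $\GR{\underline{C}^{\otimes d'}}$ has index at most $d'$ in $(\F{v}\Laurrentreihe{z'}/({z'}^{n+1}))^\times$. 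You instead argue structurally: perfectness of $\F{v}$ makes $x\mapsto x^{p^e}$ a bijection $\F{v}\Laurrentreihe{z}^\times\to\F{v}\Laurrentreihe{z^{p^e}}^\times$, and $p\nmid d'$ makes the $d'$-power map bijective on the pro-$p$ principal units and of finite index on $\F{v}^\times$. Your route is shorter and makes transparent why only $p^e$, not $d'$, obstructs openness; the paper's computation additionally produces the explicit index bound $d'$ and the comparison of the Tate tower of $\M$ with the Carlitz--Tate tower used in the remark that follows. One small repair: by Example~\ref{Ex: Carlitz representation}, $\GR{\Carlitz}(\AbsGalois{K^{un}})$ is only of finite index in $\F{v}\Laurrentreihe{z}^\times$ (surjectivity is stated over $\overline{\F{v}}\DoppelRund{\zeta}$), so the image over $K^{un}$ is the set of $d$-th powers of an open subgroup rather than exactly $(\F{v}\Laurrentreihe{z}^\times)^d$; your perfectness and prime-to-$p$ arguments apply verbatim to such a subgroup, so nothing is lost.

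The obstacle you flag at the end is, however, a genuine gap in your containment step, and your proposed resolution does not work as stated: over $R$ itself one only has $\tau_{\hat{M}}=c\cdot(z-\zeta)^d$ with $c\in\Rz^\times$, so $\M$ need not lie in the essential image of $f^*$, Corollary~\ref{Cor:Faser von LGM} does not apply to $\M$ over $R$, and there is no ``intrinsic'' $\F{v}\Laurrentreihe{z^{p^e}}$-structure on $\TateTFunc{\M}$ independent of the chosen isomorphism over $R^{un}$. Concretely, $\GR{\M}$ differs from $\GR{\Carlitz}^{d}$ by the unramified character attached to $c$, whose value on a Frobenius lift is essentially a norm of the reduction of $c$ and need not lie in $\F{v}\Laurrentreihe{z^{p^e}}^\times$; the finer structure is therefore only visibly preserved by $\AbsGalois{K^{un}}$. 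Be aware that the paper's proof shares this soft spot (its diagram asserts the factorization for $\AbsGalois{K}$, but the actual computation is carried out over $R^{un}$, respectively $\overline{R}$), so what both arguments really deliver is containment and openness for $\GR{\M}(\AbsGalois{K^{un}})$ --- enough for the positive openness assertions, since the full image contains the image of this subgroup --- but the literal containment of $\GR{\M}(\AbsGalois{K})$ requires controlling the unit $c$ (the unramified twist), which neither your heuristic nor the paper's diagram supplies; do not present it as a consequence of functoriality alone.
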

\begin{proof}
	By proposition~\ref{prop:Form local rank 1 shtuka} $\M\otimes_R R^{un}$ is isomorphic to $\Carlitz^{\otimes d}$ over $R^{un}$. Let $ d'  \in \Z$ be the largest number such that $ d=p^e\cdot d' $ holds with $ p \nshortmid d' $ and $e\geq 0$.\\
	If  $d=d'=1$ holds, then  it follows that   $\GR{\M}(\AbsGalois{K})$ contains the subgroup $\GR{\M}(\AbsGalois{K^{un}})= \GR{\Carlitz}(\AbsGalois{K^{un}})$, since $K^{un}$ is a finite field extension of $ \overline{\F{v}}\DoppelRund{\zeta}$.
	The group $\GR{\Carlitz}(\AbsGalois{K^{un}})$ has finite index in $\GR{\Carlitz}(\AbsGalois{\overline{\F{v}}\DoppelRund{\zeta}})=\F{v}\Laurrentreihe{z}^\times$, see example~\ref{Ex: Carlitz representation}. So the assertion follows.\\
	Let $d\neq 1$ and let $ f:\F{v}\Laurrentreihe{z'}\to \F{v}\Laurrentreihe{z}$ be the homomorphism with $ z'\mapsto z^{p^e}$. Then
	\begin{align} \label{eq: reduction of M}
	\M\otimes_R R^{un}=(R^{un}\Laurrentreihe{z},\tau_{M})	&\cong  (R^{un}\Laurrentreihe{z}, (z-\zeta)^{p^e\cdot d'}) \nonumber\\
	&\cong  (R^{un}\Laurrentreihe{z}, (z^{p^e}-\zeta^{p^e})^{ d'})\nonumber\\
	&\cong f^*(R^{un}\Laurrentreihe{z'}, (z'-\zeta^{p^e})^{ d'})\nonumber\\
	&\cong f^*(\overline{\F{v}}\Laurrentreihe{\zeta'}\Laurrentreihe{z'}, (z'-\zeta')^{ d'}) \otimes_{\overline{\F{v}}\Laurrentreihe{\zeta'}\Laurrentreihe{z'}}R'\Laurrentreihe{z'}
	\end{align}
	holds.
  Set $  \overline{R}:=\overline{\F{v}}\Laurrentreihe{\zeta'} $.
  Let $ K, K^{un} $ and $ \overline{K} $ be the fraction fields of $ R, R^{un} $ and $ \overline{R}$ respectively.
	Then the inclusions	$ \AbsGalois{K'}\subseteq \AbsGalois{\overline{K}} $ is open.
  Let $\underline{C}=(\overline{R}\Laurrentreihe{z'},z'-\eta)$ be the Carlitz shtuka over $\overline{R}$.\\
	By equation~\eqref{eq: reduction of M}, the algebraic group associated to the pullback $f^*\underline{\hat{C}}$ is  a ring extension of the algebraic group $ {\iMG{\underline{\hat{C}}}}$ associated to $\underline{\hat{C}}$ over $\F{v}\Laurrentreihe{z'}$  to $\F{v}\Laurrentreihe{z}$, see corollary~\ref{Cor:Faser von LGM}.
	So the image of the Galois representation is already in  $\iMG{\Carlitz}(\F{v}\Laurrentreihe{z'})=\F{v}\Laurrentreihe{z'}^\times $ and  the following inclusion exits:
	\begin{align*}
	\begin{xy} \xymatrix{
		\AbsGalois{K}\ar[r]^-{\GR{f^*\Carlitz}} \ar[d]_{\GR{\underline{\hat{C}}}} & \iMG{f^*\Carlitz}(\F{v}\Laurrentreihe{z})\\
		\F{v}\Laurrentreihe{z'}^\times=\iMG{\Carlitz}(\F{v}\Laurrentreihe{z'})\ar@{^{(}->}[r] &\iMG{\Carlitz} ( \F{v}\Laurrentreihe{z})=\F{v}\Laurrentreihe{z}^\times, \ar[u]
	}
	\end{xy}
	\end{align*}
	where the right side inclusion is an equality as both are the $\F{v}\Laurrentreihe{z}$-valued points.	So if $p \nmid  d$ and  hence $e\geq 1$ and $\F{v}\Laurrentreihe{z'}^\times=\F{v}\Laurrentreihe{z^{p^e}}^\times \subseteq \F{v}\Laurrentreihe{z}^\times $, the  image of Galois factors through $\F{v}\Laurrentreihe{z'}^\times $.
  We compute the image of $\GR{\Carlitz}\in \F{v}\Laurrentreihe{z'}^\times$.\\
	We denote by $\overline{K}^{un}$ the maximal unramified extension of $\overline{K}$. Let  $\overline{K}_n:= \overline{K}(l_0',\dots,l_n')$ and   $\overline{K}_\infty:= K(l_i':i\in \mathrm{N}_0)$  be the Carlitz-Tate tower of $\overline{K}$,
	ie. the fields extension of $ \overline{K}$ generated by the solution of the Carlitz-Tate polynomials, cf. example~\ref{ex: Carlitz tate module} and example~\ref{Ex: Carlitz representation}.
	Like in \cite[Example 4.10 ]{Hartl.Kim.2015} the extension $\overline{K}_0 $ over $\overline{K}$ is totally tamely ramified of degree $q-1$ and $\overline{K}_n$ over $\overline{K}_{n-1}$ is totally wildly ramified of degree $q$ for $n\geq 1$.
	Like in example~\ref{Ex: Carlitz representation} the field extension to $\overline{K}^{un}$  and its Carlitz-Tate tower $\overline{K}_n^{un}:= \overline{K}(l_0',\dots,l_n')$ and   $\overline{K}_\infty^{un}:= K(l_i':i\in \mathrm{N}_0)$ has the same ramification. So we can only consider the extension of the local shtukas to the maximal unramified extensions.
	By equation~\eqref{eq: reduction of M}, $\M$ is isomorphic to $f^*\Carlitz^{\otimes d}$ over $\overline{R}^{un}$.
	Then $\TateTFunc{\M}\cong (\TateTFunc{f^*\underline{C})}^{\otimes d'} $, as $\TateTFunc{}$ is a tensor functor. So by the definition of the tensor product in $\KatMod{A_v}$, the Tate-module $\TateTFunc{f^*\underline{C}}^{\otimes d'}$ is generated as an
	$A_v$-module by $a= \sum_{n=0}^{\infty} a_n {z'}^n$, which is a $d'$-th power of the inverse basis element  of $\TateTFunc{f^*\underline{C}}$. \\
	Set $a = (l_+')^{d'}$  for $l_+'=\sum_{i=0}^{\infty}l_i' {z'}^i$ and  consider $d'$ indices $i_1,\dots,i_{d'}$. Define $j_m:= i_m-i_{m+1}$ for $1\leq m\leq d'$ and set $n=i_1= j_1+\dots +j_d$.
	Set $I_n:=\{(j_1,\dots ,j_d)\in \mathbb{N}_0^{d'} |\ j_1+\dots +j_d=n\}$ and $I_n^{<}:=\{(j_1,\dots ,j_d)\in I_n |\ j_m<n \ \forall m=1,\dots, d'\}$.
	Then we have {by iteration} of the  product formula for formal power series
	\begin{align}\label{eq: potenz von laurrent}
	a&= \sum\limits_{n=0}^{\infty} a_n {z'}^n = (\sum\limits_{i=0}^{\infty} l_i {z'}^i)^{d'}\nonumber\\
	&= \sum\limits_{i_1=0}^{\infty}(\sum\limits_{i_2=0}^{i_1}(\dots (\sum\limits_{i_{d'}=0}^{i_{d'-1}} l_{i_{d'}}'\cdot l_{i_{d'-1}-i_{d'}}' \cdot \dots \cdot l_{i_{1}-i_{2}}')) {z'}^{i_1} \nonumber\\
	&= \sum\limits_{n=0}^{\infty} (d' {l_0'}^{d'-1}\cdot l_n'+ \sum\limits_{(j_1,\dots ,j_{d'})\in I_n^<}l_{j_1}'\cdot \dots \cdot l_{j_{d'}}') {z'}^n
	\end{align}
	Set $ \overline{K}^{un,d'}_n:= \overline{K}^{un}(a_0\dots a_n)$. The coefficient $l_n'$ is prime to $ \sum_{(j_1,\dots ,j_{d'})\in I_n^<}l_{j_1}'\cdot \dots \cdot l_{j_{d'}}')$ for all $n$ and hence $\overline{K}_n^{un}= \overline{K}^{un, d'}_n(l_0)$ holds.
	Then $l_0^{d'}-a_0$ is divided by  the minimal polynomial of the field extension $\overline{K}^{un}_0$ over $ \overline{K}^{un,d'}_0$, which implies
	$[ \overline{K}^{un}_n: \overline{K}^{un,d'}_n]\leq d'$. Then  $ \overline{K}^{un}_n$ is Galois over $\overline{K}^{un,d'}_n$ and  we consider the following diagram induced by Galois theory:
	\begin{align*}
	\begin{xy}
	\xymatrix{
		\Gal(\overline{K}^{un}_n/\overline{K}^{un}) \ar@{->>}[r]^-f   \ar[d]_-{\rho_{\underline{C}}}^-{\cong}&
		\Gal (\overline{K}^{un,d'}_n/\overline{K}^{un}) \ar[d]^-{\rho_{\underline{C}^{\otimes d'}}} &
		g \ar@{|->}[r]\ar@{|->}[d]&
		**[r] g\restrict{\overline{K}^{un,d'}_n} \ar@{|->}[d]\\
		(\F{v}\Laurrentreihe{z'}/({z'}^{n+1}))^\times \ar@{->>}[r]^-{(x\mapsto x^{d'})}_-{g}&
		((\F{v}\Laurrentreihe{z'}/({z'}^{n+1}))^\times)^{d'} &
		\GR{\Carlitz}(g)= \frac{g(l_+)}{l_+} \ar@{|->}[r]&
		\frac{g(l_+)}{l_+}^{d'}= \frac{g(a)}{a}.
	}
	\end{xy}
	\end{align*}
	The morphism $\rho_{\underline{C}^{\otimes d'}}$ is surjective as $(x\mapsto x^{d'})\circ\GR{\Carlitz}$ is surjective. Let  $g(a)=a$, then $g(a_i)=a_i$ holds for all $i$. But then $g= \mathrm{id}$. So $\rho_{\underline{C}^{\otimes d'}}$ is also an isomorphism. It remains to show that $ \rho_{\underline{C}^{\otimes d'}}(\mathrm{Gal} (\overline{K}^{un}_\infty/ \overline{K}^{un} )) \subseteq \F{v}\Laurrentreihe{z'}^\times$ is open with index $\leq d'$.\\
	We know  by Galois theory that $\Ker \ f = \mathrm{Gal}( \overline{K}^{un}_n/ \overline{K}^{un,d'}_n)$. So the number of elements in the kernel is lesser equal $d'$. Then  $\Ker \ g$ has also lesser equal $d'$ number of elements. $\Ker \ g$ is a subset of $(\F{v}\Laurrentreihe{z'}/({z'}^{n+1}))^\times$ which consists $(q_v-1)\cdot q_v^n$ elements.
	Then $\#\mathop{\mathrm{im}} g\geq \frac{(q_v-1)\cdot q_v^n }{d'}$. Therefore the index of the image is lesser equal $d'$ for all $n$.
\end{proof}
\begin{rem}
	Let $\M$ be a local shtuka over $R$ with fraction field $K$. We call the field towers given by a basis of the Tate-module \emph{Tate tower of $\M$}  and the tower given by the Carlitz Tate polynomials \emph{Carlitz-Tate towers}. Classically the last one is denoted by \emph{$v$-adic cyclotomic tower of $K$}. In the same way,  we call the field extensions \emph{Tate field extensions of $K$ generated by $\M$} and \emph{Carlitz-Tate field extensions of $K$}.\\
	The above computation shows that if we are in the open image situation, the Carlitz-Tate field extensions of $K$  gives a upper bound for all Tate field extensions of $K$ generated by $\M$ of rank $1$.
\end{rem}
\begin{ex}[squared Carlitz local shtuka]
	Consider the Carlitz shtuka defined by $\underline{\hat{C}}= (\Rz,(z-\zeta))$ over $ R:= \F{q}\Laurrentreihe{\zeta}$ and set $ K=\F{q}\DoppelRund{\zeta} $. Let $l_+= \sum_{i=0}^{\infty}l_i z^i  \in \MO{K^{sep}}\Laurrentreihe{z}$ be the solutions of the Carlitz-Tate polynomials, cf. equation~\eqref{eq: Carlitz-Tate equation}.
	The squared Carlitz shtuka is defined as $\underline{\hat{C}}^{\otimes2}:=(\Rz,(z-\zeta)^2) $ and by proposition~\ref{prop: Openness rank one shtuka} it's Tate-module is given by $\TateTFunc{\Carlitz^{\otimes2}}= a^{-1} \cdot\F{q}\Laurrentreihe{z} $ for $ a=l_+^{2}$.
	After solving this equation, the element $a = \sum_{i=0}^{\infty}a_i z^i$ given by $ a_0=l_0^2 $ and $ a_n=2l_0l_n+ \sum_{i=1}^{n-1}l_il_ {n-i}$ for $n>0$.
	Set $K_n:=K(l_0,\dots, l_n)$ and $ K^2_n:=K(a_0,\dots a_n)$. Then  $K^2_n(l_0) =K_n$ holds, as $ l_n$ fulfills $l_n= \frac{1}{2l_0}(a_n-\sum_{i=1}^{n-1}l_il_ {n-i}) $.
	It follows that $ \mathrm{minpo}_{l_0/ K^2_0}  $ has to divide $ l_0^2-a_0 $. Furthermore, it equals $ l_0^2-a_0 $, as the extension of $ K^2_0$ by $l_0$ is a real extension. To show this, let $ q=3 $. Then $a_0^2=\zeta^2$ and we have that $ a_0={l_0'}^{q-1}=\zeta $ holds. Hence
	$ K^2_0 (l_0)=K$ is a subfield of $ K_0 $ with $ [K^2_0:K]=(q-1)=2 $. For $ q>3 $, $ a_0 $ is a $ (q-1)/2=:d $-root of $ -\zeta $. The field extension generated by roots cannot contain $ 2 $-roots and hence $ l_0 \notin K^2_0 $.
	So  $[K_n:K^2_n]= 2$ and by Galois theory $[K^2_n:K^2_{n-1}]=q  $ holds for all $ n > 0 $.\\
	Now consider the Galois representation $ \GR{\Carlitz^{\otimes2}}= (\GR{\Carlitz})^{\otimes 2} $ and the induced diagram
	\begin{align*}
	\begin{xy} \xymatrix{
		\Gal(K_n/\Fqrzeta) \ar[d]_-{\dsim} \ar@{->>}[r] & \Gal(K^2_n/\Fqrzeta)\ar[d]^-{\dsim} \\
		\Fqz^\times/(z^{n+1}) \ar@{->>}[r] &( \Fqz^\times/(z^{n+1}))^2
	}
	\end{xy}
	\end{align*}
	for all $n \geq 0$. Here, the ring $ ( \Fqz^\times/(z^{n+1}))^2 $ is given by the elements $ \sum_i^n b_i z^i  $ in $ \Fq[z]^\times $  with $ b_0 \in (\F{q}^\times)^2 $.
	Then $ \Fqz^\times/ (\Fqz^\times)^2=\F{q}^\times/(\F{q}^\times)^2=\{1,\alpha\} $ with $ \alpha \in  \F{q}^\times \setminus (\F{q}^\times)^2 $ holds and it follows that the squared group has index $ 2 $ in the original. So it is an open subgroup, which implies the openness of the Galois representation.\\
	Also by proposition~\ref{prop: Openness rank one shtuka}, we have the Galois openness, as $d'=d=2$ holds for $\underline{\hat{C}}^{\otimes2}$ and we assumed in the beginning that $p$ is odd.
\end{ex}
\begin{ex}
	We assume the same setup as in the previous example and  consider the local shtuka $ \M:=(\Rz,\tau_M=(z^q-\zeta^q)) $ of rank $1$ of dimension $q$ over $R$. Then the motivic Galois group is given by $\GM{}$. Thus by the previous proposition the image is given by $\F{q}\Laurrentreihe{z^q}^\times $, which is not $v$-adically open in $\F{q}\Laurrentreihe{z}^\times $.
\end{ex}
\begin{prop}\label{prop: openes for higher r}
	Let $\M$ be a local shtuka of rank $r\geq1$  and dimension $d$ over $R$.\\
	If $\mathrm{char}(\F{q}) | d$ then ${\GR{\M}(\AbsGalois{K})}\subseteq \mathrm{GL}_r(\F{v}\Laurrentreihe{z})$ is not open.
\end{prop}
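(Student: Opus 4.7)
The plan is to reduce to the rank one theorem (Theorem~\ref{prop: Openness rank one shtuka}) by passing to the determinant. Set $\det\M := \bigwedge^{r}\M$, the top exterior power in the tensor category $\KatLshtuka{A_v,R}$. This is a local shtuka of rank $1$, and its structure morphism is the determinant of $\tau_{\hat M}$, so by Lemma~\ref{Lemma: dimension} its (virtual) dimension equals $d$. Since the Tate module functor $\TateTFunc{}$ is a tensor functor (Proposition in section 4), we have a canonical isomorphism $\TateTFunc{\det\M}\cong\det\TateTFunc{\M}$ of $A_v$-modules compatible with the Galois action, so $\GR{\det\M}=\det\circ\GR{\M}$.

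Assuming $p\mid d$, Theorem~\ref{prop: Openness rank one shtuka} applies to $\det\M$: writing $d=p^e d'$ with $p\nmid d'$ and $e\geq 1$, the image $\GR{\det\M}(\AbsGalois{K})$ is contained in $\mathrm{GL}_1(\F{v}\Laurrentreihe{z^{p^e}})$, and in particular it is not open in $\mathrm{GL}_1(\F{v}\Laurrentreihe{z})$, because $\F{v}\Laurrentreihe{z^{p^e}}^{\times}$ is a closed subgroup of infinite index (as $e\ge 1$) in $\F{v}\Laurrentreihe{z}^{\times}$.

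Now suppose, for contradiction, that $\GR{\M}(\AbsGalois{K})$ is open in $\mathrm{GL}_r(\F{v}\Laurrentreihe{z})$. The determinant map
\begin{align*}
\det: \mathrm{GL}_r(\F{v}\Laurrentreihe{z})\longrightarrow \mathrm{GL}_1(\F{v}\Laurrentreihe{z})
\end{align*}
is a continuous surjective group homomorphism that is open (it admits a continuous section via $x\mapsto\mathrm{diag}(x,1,\dots,1)$, so it sends open sets to open sets). Consequently the image $\det(\GR{\M}(\AbsGalois{K}))=\GR{\det\M}(\AbsGalois{K})$ would be open in $\mathrm{GL}_1(\F{v}\Laurrentreihe{z})$, contradicting the rank one conclusion above. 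Hence $\GR{\M}(\AbsGalois{K})$ cannot be open in $\mathrm{GL}_r(\F{v}\Laurrentreihe{z})$.

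The main thing to verify carefully is the compatibility $\GR{\det\M}=\det\circ\GR{\M}$; this follows from the fact that the fiber functor $\TateTFunc{}$ respects the tensor (and hence exterior-power) structure of $\KatLshtuka{A_v,R}$ and that the chosen basis of $\TateTFunc{\M}$ induces, via its top exterior power, a basis of $\TateTFunc{\det\M}$. The only other subtle point is the dimension computation for the determinant shtuka, but this is immediate from part (1) of Lemma~\ref{Lemma: dimension}, since the valuation of $\det\tau_{\hat M}$ at $z-\zeta$ is exactly $d$ by definition.
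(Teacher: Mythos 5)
Your proposal is correct and follows essentially the same route as the paper: reduce to the rank-one determinant shtuka $\Lambda^r\M$, which has the same dimension $d$, apply Theorem~\ref{prop: Openness rank one shtuka} to see its image lands in $\F{v}\Laurrentreihe{z^{p^e}}^\times$, and contradict openness of $\GR{\M}(\AbsGalois{K})$ via the determinant map. The paper makes the openness of $\det$ explicit with the matrices $\mathrm{diag}(1+z^m h,1,\dots,1)$, which is the same idea as your continuous section $x\mapsto\mathrm{diag}(x,1,\dots,1)$.
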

\begin{proof}
	Let $\M$ be a local shtuka of rank $r\geq1$ and dimension $d$ over $R$. We denote by $\Lambda^r\M:= (\Rz, \mathrm{det}\tau_M)$ the determinant of $\M$. $\Lambda^r\M$ has rank $1$ but the same dimension $d$ as $\M$. Set $p=\mathrm{char}(\F{q})$ and let $ d'  \in \Z$ be the largest number such that $ d=p^e\cdot d' $ holds for $ p \nmid d' $ and $e\geq 0$.
	Let $\GR{\M}:\AbsGalois{K}\to \mathrm{GL}_r(\F{v}\Laurrentreihe{z})$  be the $v$-adic Galois representation attached to $\M$ and $\GR{\Lambda^r\M}:\AbsGalois{K}\to \F{v}\Laurrentreihe{z}^\times$ the $v$-adic Galois representation attached to $\Lambda^r\M$.
	As $\Lambda^r\M$ has rank $1$, $\GR{\Lambda^r\M}$ factors by proposition~\ref{prop: Openness rank one shtuka} through $\F{v}\Laurrentreihe{z^{p^e}}^\times$ and we can consider the following {commutative} diagram
	\begin{align}\label{eq: Digaram determinante offen}
	\begin{xy}
	\xymatrix{
		\AbsGalois{K} \ar[r]^-{\GR{\M}} \ar[d]_-{\GR{\Lambda^r\M}} & \mathrm{GL}_r(\F{v}\Laurrentreihe{z}) \ar[d]^-{\mathrm{det}}\\
		\F{v}\Laurrentreihe{z^{p^e}}^\times \ar[r]_-{(*)}   & \F{v}\Laurrentreihe{z}^\times
	}
	\end{xy}
	\end{align}
	Assume $\GR{\M}$ is open.  Then there exists a standard open subset $ 1+z^m\F{v}\Laurrentreihe{z}^{r\times r}$ in $ \mathrm{GL}_r(\F{v}\Laurrentreihe{z})$  for $m>> 0$, onto which a standard open set of $\AbsGalois{K}$ is sent. Let $h \in \F{v}\Laurrentreihe{z}$ and
	\begin{align*}
	\underline{h}=
	\begin{pmatrix}
	1+z^m \cdot h & 0       & \dots  & 0 \\
	0             & 1       & \ddots & \vdots   \\
	\vdots        & \ddots  & \ddots & 0  \\
	0             & \dots   & 0      & 1 \\
	\end{pmatrix}
	\in 1+z^m\F{v}\Laurrentreihe{z}^{r\times r}
	\end{align*}
	Then  $\mathrm{det}(\underline{h})= 1+z^m \cdot h \in \F{v}\Laurrentreihe{z}^\times$ holds, which is again inside a standard open set. So it follows that the morphism $(*)$ of diagram~\eqref{eq: Digaram determinante offen} has to be an open inclusion and furthermore $\GR{\Lambda^r\M} \subseteq \F{v}\Laurrentreihe{z}^\times$ is open. This implies with proposition~\ref{prop: Openness rank one shtuka} $d=d'$ and $p^e =1$.
\end{proof}
The result of theorem~\ref{prop: Openness rank one shtuka} ca be transferred to  \Am s in the following way:
\begin{Theorem}
	Let $ \underline{M} $ be a   \Am \ of  rank $1$ and dimension $d$ over a finite field extension $L$ over $Q$. Let $ d'  \in \Z$ be the largest number such that $ d=p^e\cdot d' $ holds for $p=\mathrm{char}(\Fq)$, $ p \nshortmid d' $ and $e\geq 0$.
	Then $\GR{\underline{M}}(\AbsGalois(L))$ is contained and open in $\mathrm{GL}_1(\F{v}\Laurrentreihe{z^{p^e}})$.
	In particular, $\GR{\underline{M}}(\AbsGalois{L})$ is open in $\HalgGp{\underline{M}}(Q_v)$ if and only if $p\nmid d$.
\end{Theorem}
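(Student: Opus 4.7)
The plan is to reduce the statement to Theorem~\ref{prop: Openness rank one shtuka} via the associated local shtuka construction. After possibly replacing $L$ by a finite extension $L'/L$, I can arrange that $\underline{M}$ acquires a good model $\underline{\mathcal{M}}$ over the valuation ring $R$ of some completion $K=L'_w$ of $L'$ at a place $w$. This enlargement is harmless: $\AbsGalois{L'}$ has finite index in $\AbsGalois{L}$, so $\GR{\underline{M}}(\AbsGalois{L'})$ is a finite-index subgroup of $\GR{\underline{M}}(\AbsGalois{L})$, and any subgroup of a topological group that contains an open subgroup is itself open. The associated local shtuka $\underline{\hat{M}}_v(\underline{\mathcal{M}})$ from Definition~\ref{def: associated shtuka} inherits rank $1$ and virtual dimension $d$ from $\underline{M}$, and by \cite[Prop.~4.6]{Hartl.Kim.2015} the Tate modules $\TateTFunc{\underline{M}}$ and $\TateTFunc{\underline{\hat{M}}_v(\underline{\mathcal{M}})}$ are canonically isomorphic compatibly with the Galois actions once $\AbsGalois{K}$ is identified with a decomposition subgroup of $\AbsGalois{L'}$.

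Applied to $\underline{\hat{M}}_v(\underline{\mathcal{M}})$, Theorem~\ref{prop: Openness rank one shtuka} yields that $\GR{\underline{\hat{M}}_v(\underline{\mathcal{M}})}(\AbsGalois{K})$ is contained in and open in $\mathrm{GL}_1(\F{v}\Laurrentreihe{z^{p^e}})$. Under the identification above this coincides with the restriction of $\GR{\underline{M}}$ to $\AbsGalois{K}\subseteq \AbsGalois{L}$, so $\GR{\underline{M}}(\AbsGalois{L})$ contains an open subgroup of $\mathrm{GL}_1(\F{v}\Laurrentreihe{z^{p^e}})$ and is hence itself open there. This settles the openness assertion.

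For the containment $\GR{\underline{M}}(\AbsGalois{L})\subseteq \mathrm{GL}_1(\F{v}\Laurrentreihe{z^{p^e}})$, the key input is the characteristic-$p$ identity $(A_v^\times)^{p^e}=\F{v}\Laurrentreihe{z^{p^e}}^\times$, which follows from $(1+\sum_{i\geq 1} a_iz^i)^{p^e}=1+\sum_{i\geq 1} a_i^{p^e}z^{ip^e}$ together with the surjectivity of the $p^e$-th power map on $\F{v}$. Combined with Proposition~\ref{prop:Form local rank 1 shtuka}, which shows that over a sufficiently large unramified extension the local shtuka becomes $\Carlitz^{\otimes d}$, this yields that the character $\GR{\underline{M}}$ factors through the $p^e$-th power subgroup of $A_v^\times$. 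The ``in particular'' clause then follows as in Theorem~\ref{prop: Openness rank one shtuka}: the Zariski closure $\HalgGp{\underline{M}}$ in $\mathrm{GL}_1$ over $Q_v$ equals $\mathbb{G}_m$, and $\mathrm{GL}_1(\F{v}\Laurrentreihe{z^{p^e}})$ is open in $\mathrm{GL}_1(\F{v}\Laurrentreihe{z})$ if and only if $p^e=1$, i.e.\ $p\nmid d$.

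The main obstacle is the last step: propagating the containment in $\mathrm{GL}_1(\F{v}\Laurrentreihe{z^{p^e}})$ from the decomposition subgroup $\AbsGalois{K}$ (where it is immediate from Theorem~\ref{prop: Openness rank one shtuka}) to all of $\AbsGalois{L}$. I would do this either via a Tannakian descent argument applied directly to $\underline{M}$ in the spirit of Corollary~\ref{Cor:Faser von LGM}, or by a character-theoretic argument exploiting that rank $1$ $A$-motives of dimension $d$ are, up to a finite-order twist, $d$-th tensor powers of the Carlitz $A$-motive.
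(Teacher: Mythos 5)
Your overall strategy---reducing to Theorem~\ref{prop: Openness rank one shtuka} through the local shtuka associated at a place $w$ above $v$---is the same as the paper's, but your proof is incomplete at exactly the point you yourself flag as the main obstacle: the containment $\GR{\underline{M},v}(\AbsGalois{L})\subseteq\mathrm{GL}_1(\F{v}\Laurrentreihe{z^{p^e}})$ is never actually established, and without it the openness claim does not follow either, since a subgroup can only be open \emph{in} $\mathrm{GL}_1(\F{v}\Laurrentreihe{z^{p^e}})$ if it is contained in it. Your reduction to a finite extension $L'$ of $L$ (to obtain a good model) makes the containment harder, not easier: bounding $\GR{\underline{M},v}(\AbsGalois{L'})$ says nothing about the images of elements of $\AbsGalois{L}\setminus\AbsGalois{L'}$, and likewise Proposition~\ref{prop:Form local rank 1 shtuka} and the identity $(A_v^\times)^{p^e}=\F{v}\Laurrentreihe{z^{p^e}}^\times$ only control the restriction of the character to a decomposition (indeed inertia) subgroup at $w$. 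Neither of your proposed repairs is carried out: Corollary~\ref{Cor:Faser von LGM} concerns extension of the coefficient ring $\F{v}\Laurrentreihe{z'}\to\F{v}\Laurrentreihe{z}$ for shtukas over a fixed base $R$, so it is not a descent statement from $L_w$ to $L$; and the claim that every rank-one \Am\ of dimension $d$ is a finite-order twist of the $d$-th tensor power of the Carlitz motive is particular to $A=\Fqt$---for general $A$ there is no Carlitz \Am\ and rank-one objects are governed by the class group (Hayes theory), so this route would require substantial extra input not available in the paper.

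The paper closes precisely this gap with a short globalization argument that you could adopt. Since $\rk\underline{M}=1$, the whole representation takes values in the commutative group $\mathrm{Aut}_{A_v}(\TateTFunc{\underline{M}})\cong\F{v}\Laurrentreihe{z}^\times$, so for every $g\in\AbsGalois{L}$ one has $\GR{\underline{M},v}(g\AbsGalois{L_w}g^{-1})=\GR{\underline{M},v}(\AbsGalois{L_w})$; invoking \cite[\textsc{II}, Prop.~8.5]{Neukirch.1999} for the fact that $\AbsGalois{L}$ is generated by the conjugates $g\AbsGalois{L_w}g^{-1}$, the global image is therefore generated by, hence equal to, $\GR{\underline{M},v}(\AbsGalois{L_w})$, and both the containment and the openness then follow in one stroke from Theorem~\ref{prop: Openness rank one shtuka} applied over the valuation ring of $L_w$ (no auxiliary finite extension of $L$ is introduced). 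In short: your skeleton matches the paper, but the decisive step is missing, and the key observation that makes it cheap---commutativity of the rank-one target, which forces all conjugate decomposition subgroups to have the same image---does not appear in your proposal.
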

\begin{proof}
	Let  $L_w$ be the completion of $L$ at  a place $w$ above $ v $.   Then the absolute Galois group $\AbsGalois{L_w}$ of $L_w$ is a subgroup of the absolute Galois group $\AbsGalois{L}$ of $L$.
	$\AbsGalois{L}$ is generated by the groups $g \AbsGalois{L_w} g^{-1}$ for all  $g\in \AbsGalois{L}$.\\
	Then the image of $\GR{\M_v(\underline{M})}$ is inside $\mathrm{Aut}_{A_v}(H^1_v(\M_v(\underline{M})),A_v)= \mathrm{Aut}_{A_v}(H^1_v(\underline{M}),A_v)= \F{v}\Laurrentreihe{z}^\times$. The latter one is the image of $\GR{\underline{M},v}$.
	For all $g \in \AbsGalois{L}$, $\GR{\underline{M},v}(g \AbsGalois{L_w} g^{-1})= \GR{\underline{M},v}( \AbsGalois{L_w})$ is invariant, as $\F{v}\Laurrentreihe{z}^\times$ is commutative.
	So by \cite[\textsc{II}, Prop. 8.5]{Neukirch.1999} $\GR{\underline{M},v}(\AbsGalois{L})$ is generated by the groups $\GR{\underline{M},v}( \AbsGalois{L_w})$. Then the assertion follows by theorem~\ref{prop: Openness rank one shtuka}.
\end{proof}
Furthermore, if we use the same reducing technique as in the previous proof we can extend the result also to \Am s of higher rank, compare~\ref{prop: openes for higher r}:
\begin{Corollary}\label{Cor: Amot is not open}
	Let $ \underline{M} $ be a  \Am \ of rank $r$  and  dimension $d$ over a finite field extension $L$ over $Q$.
	If $\mathrm{char}(\F{v}) | d$ then ${\GR{\M}(\AbsGalois{K})}\subseteq \mathrm{GL}_r(\F{v}\Laurrentreihe{z})$ is not open.
\end{Corollary}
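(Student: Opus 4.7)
The strategy is to reduce to the rank one A-motive theorem via the determinant construction, in exact parallel with the reduction of Proposition~\ref{prop: openes for higher r} from the higher rank local shtuka case to the rank one local shtuka case.

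Concretely, I first form the determinant A-motive $\Lambda^r\underline{M}$, that is, the top exterior power of $\underline{M}$ in the Tannakian category of A-motives; its underlying $A_L$-module is $\Lambda^r M$ and its structure morphism is $\det\tau_M$. This is a rank one A-motive of dimension $d$. Because the Tate module functor $\check{V}_v$ is a tensor functor, it is compatible with exterior powers, yielding the identity
\begin{align*}
\GR{\Lambda^r\underline{M},v}\;=\;\det\circ\,\GR{\underline{M},v}\,.
\end{align*}
Since $\mathrm{char}(\F{v})=p$ divides $d$, the preceding rank one A-motive theorem applied to $\Lambda^r\underline{M}$ shows that $\GR{\Lambda^r\underline{M},v}(\AbsGalois{L})$ is contained in $\mathrm{GL}_1(\F{v}\Laurrentreihe{z^{p^e}})$ for some $e\geq1$, and therefore is \emph{not} open in $\F{v}\Laurrentreihe{z}^\times$.

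Suppose for contradiction that $\GR{\underline{M},v}(\AbsGalois{L})$ were open in $\mathrm{GL}_r(\F{v}\Laurrentreihe{z})$. Then by the exact matrix argument used in the proof of Proposition~\ref{prop: openes for higher r} -- inserting $1+z^m h$ into the top-left entry of the identity matrix and leaving the remaining diagonal entries equal to $1$ -- the determinant map sends a standard open neighbourhood of the identity in $\mathrm{GL}_r(\F{v}\Laurrentreihe{z})$ surjectively onto a standard open neighbourhood of $1$ in $\F{v}\Laurrentreihe{z}^\times$. Consequently $\det(\GR{\underline{M},v}(\AbsGalois{L})) = \GR{\Lambda^r\underline{M},v}(\AbsGalois{L})$ would be open in $\F{v}\Laurrentreihe{z}^\times$, contradicting the previous paragraph.

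The only points requiring verification -- all of them routine in the Tannakian framework set up earlier in the paper -- are that the determinant A-motive $\Lambda^r\underline{M}$ exists as an A-motive, that it has rank one and dimension $d$, and that its $v$-adic Galois representation is indeed the composition $\det\circ\,\GR{\underline{M},v}$. Once these compatibilities are in place, the argument is formally identical to the local shtuka version carried out in Proposition~\ref{prop: openes for higher r}, with the rank one A-motive theorem playing the role previously played by Theorem~\ref{prop: Openness rank one shtuka}.
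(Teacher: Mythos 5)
Your proposal is correct and is essentially the paper's intended argument: the corollary is given there only as a one-line remark that the determinant-reduction technique of Proposition~\ref{prop: openes for higher r}, combined with the rank-one theorem for \Am s proved just before, carries over, and your write-up is exactly that reduction spelled out at the global level. The compatibilities you flag (existence of the determinant \Am, its rank one and dimension $d$, and $\det\circ\GR{\underline{M},v}=\GR{\Lambda^r\underline{M},v}$ via tensor-functoriality of the Tate module) are indeed the only points to verify and are routine.
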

\bibliographystyle{plain}
\bibliography{SNOPUBLISH}

\begin{thebibliography}{10}

\bibitem{Anderson.1986}
Greg~W. Anderson.
\newblock $t$ -motives.
\newblock {\em Duke Mathematical Journal}, 53(2):457--502, 1986.

\bibitem{Bornhofen.Hartl.2011}
Matthias Bornhofen and Urs Hartl.
\newblock Pure anderson motives and abelian $ \tau $-sheaves.
\newblock {\em Mathematische Zeitschrift}, 268(1-2):67--100, 2011.

\bibitem{Deligne.Tannakian.1989}
Pierre Deligne.
\newblock Le groupe fondamental de la droite projective moins trois points.
\newblock pages 72--279, 1989.

\bibitem{Deligne.Milne.2018}
Pierre Deligne and James Milne.
\newblock Tannakian categories.
\newblock \url{https://www.jmilne.org/math/xnotes/tc.html}, 2018.

\bibitem{Duong.Hai.2013}
Nguyen~Dai {Duong} and Ph{\`u}ng~H{\^o} {Hai}.
\newblock {Tannakian duality over Dedekind rings and applications}.
\newblock {\em arXiv e-prints}, page arXiv:1311.1134, 2013.

\bibitem{Etingof.Gelaki.Nikshych.Ostrik.2015}
Pavel Etingof, Shlomo Gelaki, Dmitri Nikshych, and Victor Ostrik.
\newblock {\em Tensor categories}.
\newblock Mathematical surveys and monographs ; 205. Providence, RI : American
  Mathematical Society, 2015.
\newblock Literaturverz. S. 325 - 337.

\bibitem{Hartl.Juschka.2016}
Urs {Hartl} and Ann-Kristin {Juschka}.
\newblock {Pink's theory of Hodge structures and the Hodge conjecture over
  function fields}.
\newblock {\em arXiv e-prints}, page arXiv:1607.01412, Jul 2016.

\bibitem{Hartl.Kim.2015}
Urs {Hartl} and Wansu {Kim}.
\newblock {Local Shtukas, Hodge-Pink Structures and Galois Representations}.
\newblock {\em arXiv e-prints}, page arXiv:1512.05893, Dec 2015.

\bibitem{Hartl.Pal.2018}
Urs {Hartl} and Ambrus {Pal}.
\newblock {Crystalline Chebotar{\"e}v density theorems}.
\newblock {\em arXiv e-prints}, page arXiv:1811.07084, 11 2018.

\bibitem{MO.CompactSubgroups}
Shawn (https://mathoverflow.net/users/112340/shawn).
\newblock compact zariski-dense subgroups of p-adic groups.
\newblock MathOverflow.
\newblock URL:https://mathoverflow.net/q/287044 (version: 2017-11-27).

\bibitem{Maurischat.2019}
Andreas {Maurischat}.
\newblock {Anderson t-modules with thin t-adic Galois representations}.
\newblock {\em arXiv e-prints}, page arXiv:1907.05144, Jul 2019.

\bibitem{Neukirch.1999}
Jürgen Neukirch.
\newblock {\em Algebraic number theory}.
\newblock Berlin [u.a.] : Springer, 1999.
\newblock Aus dem Dt. übers.

\bibitem{Pink.1997}
Richard Pink.
\newblock The mumford-tate conjecture of drinfeld modules.
\newblock {\em Publications of the Research Institute for Mathematical Sciences
  / Kyoto University}, pages 393--425, 1997.

\bibitem{Taguchi.Wan.1996}
Y.~Taguchi and D.~Wan.
\newblock L-functions of $\varphi$-sheaves and drinfeld modules.
\newblock {\em Journal of the American Mathematical Society}, 9(3):755--781,
  1996.

\bibitem{Wedhorn.2004}
Torsten Wedhorn.
\newblock On tannakian duality over valuation rings.
\newblock {\em Journal of Algebra}, 282, 12 2004.

\end{thebibliography}

\end{document}